\numberwithin{equation}{section}
\numberwithin{figure}{section}
\theoremstyle{plain}
\newtheorem*{cor*}{\protect\corollaryname}
\theoremstyle{plain}
\newtheorem*{note}{Note}
\theoremstyle{plain}
\newtheorem{thm}{\protect\theoremname}[section]
\theoremstyle{definition}
\newtheorem{defn}[thm]{\protect\definitionname}
\theoremstyle{remark}
\newtheorem{rem}[thm]{\protect\remarkname}
\theoremstyle{plain}
\newtheorem{prop}[thm]{\protect\propositionname}
\theoremstyle{plain}
\newtheorem{lem}[thm]{\protect\lemmaname}
\theoremstyle{plain}
\newtheorem{cor}[thm]{\protect\corollaryname}
\theoremstyle{plain}
\numberwithin{equation}{section}
\numberwithin{figure}{section}
 \let\footnote=\endnote
\theoremstyle{definition}
\newtheorem{thmx}{Theorem}
\def\a{\alpha}
\def\A{\mathcal{A}}
\def\s{\sigma}
\def\L{\Lambda}
\def\R{\mathbb{R}}
\def\L{\mathcal{L}}
\def\N{\mathbb{N}}
\def\Z{\mathbb{Z}}
\def\ep{\varepsilon}
\def\I{\mathsf{I}}
\def\J{\mathsf{J}}
\def\K{\mathsf{K}}
\def\V{\mathbb{V}}
\def\W{\mathcal{W}}
\def\ol{\overline}
\def\id{\text{id}}
\newcommand{\Wloc}{\mathcal{W}_{\text{loc}}}
\def\vp{\varphi}
\def\hol{H\"older }
\def\Sig{\Sigma_T}
\def\M{\mathcal{M}}
\def\glr{\text{GL}_d(\R)}
\newcommand{\eps}{\epsilon}
\renewcommand{\phi}{\varphi}
\newcommand{\excise}[1]{}
\providecommand{\x}{\times}
\address{Department of Mathematics, Statistics, and Computer Science,
University of Illinois Chicago, Chicago, IL, USA} 
\email{bcall@uic.edu}
\address{School of Mathematics, KIAS, 85 Hoegiro, Dongdaemun-gu, Seoul, 02455, Korea} 
\email{kiho.park12@gmail.com}
  \providecommand{\corollaryname}{Corollary}
  \providecommand{\definitionname}{Definition}
  \providecommand{\lemmaname}{Lemma}
  \providecommand{\propositionname}{Proposition}
  \providecommand{\remarkname}{Remark}
  \providecommand{\theoremname}{Theorem}
\providecommand{\theoremname}{Theorem}
\newcommand{\comment}[1]{{}}
\newcommand{\abs}[1]{\left|#1\right|}
\begin{document}
\title[Bernoulli property of subadditive equilibrium states]{Bernoulli property of subadditive equilibrium states}
\author{Benjamin Call, Kiho Park}
\date{\today}


\begin{abstract}
Under mild assumptions, we show that the unique subadditive equilibrium states for fiber-bunched cocycles are Bernoulli. We achieve this by showing these equilibrium states are absolutely continuous with respect to a product measure, and then using the Kolmogorov property of these measures.
\end{abstract}

\maketitle

\section{Introduction}
In this paper, we study the Bernoulli property on a class of subadditive equilibrium states. We say a dynamical system $(X,f,\mathcal{B},\mu)$ is \textit{Bernoulli} if it is measurably isomorphic to a Bernoulli shift; see Definition \ref{defn: B}. Bernoullicity is the strongest ergodic property associated to complete randomness.

	The study of Bernoulli systems was vastly enriched by the celebrated work of Ornstein \cite{ornstein1970isomorphic, ornstein1973KnotBernoulli, ornstein1973geodesic}. Among other things, he showed that Bernoulli shifts are classified by their entropy up to measurable isomorphism. He also helped introduce many new definitions that imply the Bernoulli property; including the Weak Bernoulli, Very Weak Bernoulli, and finitely determined properties \cite{FriedmanOrnstein,OrnsteinWeissFinDet,Ornstein1970}. Some of these, such as the Very Weak Bernoulli property, are easier to verify than others, and we will make use of it in establishing the Bernoullicity of the subadditive equilibrium states.

Thanks to these checkable conditions for Bernoullicity, Ornstein theory has successfully been implemented to show that many natural invariant measures of important dynamical systems are Bernoulli. Ornstein and Weiss \cite{ornstein1973geodesic} showed that geodesic flows on negatively curved surfaces are Bernoulli with respect to Liouville measure. This was extended by Pesin \cite{pesin1977characteristic} for geodesic flows on a more general class of manifolds. More recently, the first-named author and Thompson \cite{call2019equilibrium} have shown that the measure of maximal entropy for the geodesic flow on non-positively curved rank one manifolds is Bernoulli, and Carrasco and Rodriguez-Hertz \cite{carrasco2021equilibrium} have recently shown that some equilibrium states for center isometries are Bernoulli as well. All of these works proceed by first showing that a system is $K$, and then showing that it is Bernoulli. General arguments can be found to show that smooth $K$-systems with some amount of hyperbolic structure \cite{chernov1996nonuniformly, ornstein1998bernoulli} are Bernoulli.
The main content of this paper is of a similar flavor.

For the first main result of the paper, we will now focus on the case where the base dynamical system is a two-sided mixing subshift of finite type $(\Sig,\s)$.
In this setting, Bowen \cite{bowen1974some} showed uniqueness of the equilibrium states for \hol potentials. 
These are important invariant measures for the underlying dynamical systems that often capture relevant statistical information, and they are also known to be Bernoulli \cite{bowen1974bernoulli, bowen1975ergodic}.

Recently, there have been great developments in the theory of subadditive thermodynamic formalism and its applications; see \cite{barreira2011dimension} and \cite{cao2019dimension}.
In particular, due to their connections to the dimension theory of fractals, subadditive potentials arising from matrix cocycles and their equilibrium states have been extensively studied; see \cite{MR3820437} and references therein. The norm potentials and singular value potentials are prime examples of such subadditive potentials (see Subsection \ref{subsec: cocycles}).
While the equilibrium states for the norm potentials are not necessarily unique in general, there are mild conditions on cocycles that guarantee the uniqueness of the equilibrium states. 

In the case of locally constant cocycles, Feng \cite{feng2003lyapunov, feng2009lyapunov} showed that irreducibility implies uniqueness of equilibrium states. 
Morris \cite{morris2018ergodic,morris2019necessary} then showed that total ergodicity of these equilibrium states implies mixing and that the failure of mixing can be characterized by certain structures of the cocycle. He recently improved his own result by showing that total ergodicity implies Bernoullicity \cite{morris2020totally}.
Under a different set of assumptions on the cocycle, Piraino \cite{piraino2018weak} showed that the equilibrium states are Bernoulli. While both results amount to establishing the Weak Bernoulli property, their methods of proof differ both from each other, and from the approach we take in this paper.

Similar questions on the ergodic properties of the subadditive equilibrium states were also studied beyond the realm of irreducible locally constant cocycles. 
Within the class of fiber-bunched cocycles (see Subsection \ref{subsec: cocycles}), under suitable assumptions the second author \cite{park2019quasi} and Bochi and Garibaldi \cite{bochi2019extremal} established the uniqueness of the equilibrium states for the norm potentials. For such equilibrium states, the authors showed in \cite{call2020k} that total ergodicity implies the $K$-property (see Subsection \ref{subsec: ergodic properties} and Proposition \ref{prop: CP}). Our first result shows that these equilibrium states in fact have the stronger ergodic property of being Bernoulli. It extends our previous work and also generalizes existing results on locally constant cocycles to fiber-bunched cocycles.

\begin{thmx}\label{thm: A}
Let $\A \colon \Sig \to \glr$ be a fiber-bunched cocycle. 
\begin{enumerate}
\item
Suppose that $\A$ is strongly bunched and irreducible. If the unique equilibrium state $\mu_\A$ for the norm potential $\Phi_\A$ is totally ergodic, then it is Bernoulli.
\item
Supposing instead that $\A$ is typical, the unique equilibrium state $\mu_{\A,s} $ for the singular value potential $\Phi^s_\A$ is Bernoulli.
\end{enumerate}
\end{thmx}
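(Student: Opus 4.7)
The plan is to execute the strategy outlined in the abstract: establish that the subadditive equilibrium state is absolutely continuous with respect to a product measure built from its one-sided marginals, and then combine this structural fact with the Kolmogorov property to obtain the Bernoulli property.

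First, I would construct one-sided analogues $\mu_\A^+$ and $\mu_\A^-$ of the equilibrium state on the one-sided shifts, using the transfer-operator-type machinery developed for fiber-bunched cocycles in \cite{park2019quasi} and \cite{bochi2019extremal}. In each of cases (1) and (2), this machinery supplies Gibbs-type estimates with multiplicative constants that are uniform on cylinders. Using these, on each two-sided cylinder $[\omega_{-n}\cdots\omega_n]$ in $\Sig$ I would show that $\mu_\A$ (respectively $\mu_{\A,s}$) is comparable to the product $\mu_\A^- \times \mu_\A^+$ with Radon-Nikodym derivatives bounded above and below by constants independent of the cylinder. The fiber-bunching hypothesis provides stable and unstable holonomies of the cocycle, and these should allow one to match the cylinder measure of $\mu_\A$ with the product structure in a manner analogous to Leplaideur's construction of local product structure for classical Gibbs measures.

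Second, I would invoke the $K$-property of these equilibrium states established by the authors in \cite{call2020k} and recalled here as Proposition \ref{prop: CP}. The combination of the $K$-property with the local product structure from the first step should let one verify the Very Weak Bernoulli condition. Heuristically, the $K$-property makes the conditional distribution on future cylinders, given a fixed cylinder in the far past, converge to the unconditional distribution; the absolute continuity with respect to the product measure converts this asymptotic independence into uniform $\bar{d}$-closeness of the relevant conditional distributions on finite partitions. By Ornstein's theorem, Very Weak Bernoulli implies Bernoulli, yielding both parts of the theorem.

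The main technical obstacle is the first step. Unlike the additive case, $\Phi_\A$ is not a Birkhoff sum of a single \hol function, and there is no direct Ruelle-Perron-Frobenius theorem to invoke. One must work with the quasi-multiplicative framework of \cite{park2019quasi} and \cite{bochi2019extremal} and use the cocycle's stable and unstable holonomies to define and compare the one-sided marginals $\mu_\A^+$ and $\mu_\A^-$. Extracting genuinely uniform Radon-Nikodym bounds between $\mu_\A$ and $\mu_\A^- \times \mu_\A^+$ is where the strong bunching plus irreducibility hypothesis in case (1), and the typicality hypothesis in case (2), are likely used in an essential way, since these are precisely the settings in which holonomies and quasi-multiplicativity are well-behaved. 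Once the product structure is secured, the $K$-to-Bernoulli upgrade is a standard argument.
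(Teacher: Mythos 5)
Your overall plan is aligned with the paper's --- establish local product structure, invoke the $K$-property from \cite{call2020k}, and verify Very Weak Bernoulli --- but you have inverted the location of the difficulty, and your intermediate target is stronger than what can actually be achieved.

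First, you aim to show that $\mu_\A$ is \emph{equivalent} to $\mu_\A^-\times\mu_\A^+$ with two-sided Radon--Nikodym bounds. Subadditivity gives only the one-sided inequality $\vp(\I\J)\leq \vp(\I)\vp(\J)$, which, combined with the Gibbs property for $\mu$, $\mu^+$, and $\mu^-$, yields $\mu([\I\J])\lesssim \mu^+([\I])\mu^-([\J])$ and hence $\mu\ll\mu^+\times\mu^-$. There is no corresponding lower bound: quasi-multiplicativity supplies $\vp(\I\K\J)\geq c\,\vp(\I)\vp(\J)$ only after inserting a connecting word $\K$, and gives no control over $\vp(\I\J)$ itself. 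So the equivalence you propose to extract from holonomies and transfer-operator machinery is generally unavailable. The paper's Proposition \ref{prop: LPS} is correspondingly much softer than you envision: it uses nothing beyond the Gibbs property and subadditivity (no holonomies, no RPF theorem, no quasi-multiplicativity in the step itself), and it produces only absolute continuity. The remark after Theorem \ref{thm: B} is precisely the point: absolute continuity, not equivalence, is the input, and that weakening is what makes the argument apply to subadditive equilibrium states.

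Second, and as a direct consequence, the step you dismiss as ``standard'' --- upgrading $K$ plus (measurable, absolutely continuous) local product structure to Bernoulli --- is the paper's actual technical core (all of Section \ref{sec: Bernoulli}). Under mere absolute continuity, the Radon--Nikodym derivative need not be bounded or continuous, and one cannot transport conditional measures across local product charts with uniform distortion in the naive way. The argument requires carving out, via Lusin's theorem and a $K$-property estimate on cylinders, a large family of rectangles where the density is nearly constant, and then building the $\ep$-measure-preserving map $\theta$ for the VWB condition à la Chernov--Haskell and Ornstein--Weiss. The authors explicitly note this proof had not been written down before. So the genuine gap in your proposal is twofold: you would get stuck trying to prove equivalence (it is false in general), and you would find the $K\Rightarrow$Bernoulli step to be where the real work lies, not a black box.
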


In fact, we show that if the unique equilibrium state for a subadditive and quasi-multiplicative potential over a two-sided subshift is totally ergodic, then it is Bernoulli, and Theorem \ref{thm: A} describes specific examples that satisfy such assumptions. Indeed, these assumptions imply that the equilibrium state has the $K$-property and measurable local product structure (see Remark \ref{rem: 1} and Definition \ref{defn: lps}), and Bernoullicity is then established as a consequence of the following theorem; see Remark \ref{rem: thm A as cor}.

\begin{thmx}\label{thm: symbolic K to Bernoulli}
Let $(\Sigma_T,\sigma,\mu)$ be a mixing shift of finite type with the $K$-property. If $\mu$ is non-atomic and there exists $C > 0$ such that for all words $\I,\J$,
$$\mu([\I\J]) \leq C\mu([\I])\mu([\J]),$$
then $(\Sigma_T,\sigma,\mu)$ is Bernoulli.
\end{thmx}

This condition on $\mu$ is a natural condition to impose to obtain the Bernoulli property. When $\mu([\I\J]) \geq C^{-1}\mu([\I])\mu([\J])$ as well, this condition has been shown to imply the Bernoulli property without the assumption of the $K$-property \cite{Walters05}.

In this paper, it follows from a more general theorem that provides conditions under which the $K$-property implies Bernoullicity for Smale spaces, or metric Anosov systems. These systems are generalizations of Anosov systems beyond the smooth setting, and in particular, encompass shifts of finite type.

\begin{thmx}\label{thm: B}
Let $\mu$ be a fully supported, ergodic, measure on a metric Anosov system $(X,f)$. If $\mu$ satisfies the $K$-property and furthermore, is absolutely continuous with respect to a non-atomic product measure on the local stable and unstable sets on an open neighborhood, then $\mu$ is Bernoulli.
\end{thmx}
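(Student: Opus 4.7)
My plan is to establish the Very Weak Bernoulli (VWB) property of $\mu$ with respect to a finite generating partition, from which Bernoullicity follows by Ornstein's theorem. The two hypotheses play complementary roles: the $K$-property supplies, for a fixed reference past event, total-variation convergence of the conditional distribution on the distant future to the unconditional one; the local product structure rigidly couples the conditional distributions of different past atoms, converting this fixed-reference convergence into the uniform control VWB demands.

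Let $U$ be the open neighborhood on which $\mu$ is absolutely continuous with respect to a non-atomic product measure $\nu_s \otimes \nu_u$, and fix a positive-$\mu$-measure rectangle $R \subset U$ of product form. Take $\mathcal{P}$ to be the natural $0$-cylinder partition of $\Sig$ and write $\mathcal{P}_a^b = \bigvee_{i=a}^{b}\s^{-i}\mathcal{P}$. The future partition $\mathcal{P}_N^{N+k}$ (with $N \geq 1$) is measurable with respect to the local stable $\sigma$-algebra, while the past partition $\mathcal{P}_{-n}^{0}$ is measurable with respect to the local unstable $\sigma$-algebra. Consequently, within $R$, past atoms and future atoms are literally independent under $\nu_s \otimes \nu_u$, and by absolute continuity approximately independent under $\mu$: for any past atom $A \subset R$ and any future atom $B$,
\[
\mu(B \mid A) = \bigl(1 + O(\mathrm{osc}\, \rho)\bigr) \cdot \mu(B \mid R),
\]
where $\rho = d\mu/d(\nu_s \otimes \nu_u)|_R$ and the oscillation can be made small by shrinking $R$.

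To assemble VWB, fix $\epsilon > 0$ and use the $K$-property to choose $N$ large enough that $\|\mu(\cdot \mid R) - \mu\|_{TV} < \epsilon$ on $\bigvee_{i \geq N}\s^{-i}\mathcal{P}$; this follows by a standard backward-martingale argument applied to $\mathbf{1}_R$. Combined with the static independence above, this yields $\|\mu(\cdot \mid A) - \mu\|_{TV} \lesssim \epsilon$ uniformly over past atoms $A \subset R$, with implicit constant depending on $\rho$. Past atoms not contained in $R$ are handled by replacing $\mathcal{P}$ with a large join $\bigvee_{j=0}^{M}\s^{-j}\mathcal{P}$: by ergodicity of $\mu$ and $\mu(R) > 0$, the union of atoms whose first $M$ iterates miss $R$ has arbitrarily small $\mu$-measure. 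Since total variation dominates $\bar{d}$, summing over past atoms yields the VWB inequality.

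The main obstacle will be the coupling step --- quantitatively combining the geometric product-structure independence with the dynamic $K$-mixing so that the error is uniform over all past atoms, not merely concentrated on ``typical'' ones. The Radon-Nikodym density $\rho$ may oscillate or become unbounded near the boundary of $R$, which will require either shrinking $R$ carefully or applying a Lusin-type regularization; the non-atomicity hypothesis on $\nu_s \otimes \nu_u$ is presumably used to prevent accumulation of point masses under partition refinements and to make these density estimates meaningful. A subtler issue is ensuring that the error bound on $\mu(B \mid A)$ remains uniform in the future length $k$: this uniformity is what LPS, being a static/geometric statement independent of $k$, genuinely provides over and above the $K$-property alone.
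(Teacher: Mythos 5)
Your high-level plan (VWB via Ornstein, using the $K$-property together with local product structure) is the same as the paper's, and your intuition about the roles of the two hypotheses is on target. However, the concrete mechanism you propose --- replacing the construction of an explicit coupling with a direct bound on the total-variation distance between $\mu(\cdot\mid E)$ and $\mu$ on a tail $\sigma$-algebra --- has two serious gaps. First, the density issue is not a side concern you can postpone: with only absolute continuity the density $\rho = d\mu/d(\nu_s\otimes\nu_u)$ may vanish on a set of positive $\nu_s\otimes\nu_u$-measure, take arbitrarily small or large values, and have no useful oscillation bound on any fixed rectangle. The paper spends the bulk of its effort here (Proposition \ref{prop: creation of general covering}): Lusin's theorem produces a compact $K$ of nearly full measure on which $\rho$ is continuous and bounded away from $0$ and $\infty$, and then the space is tiled by a \emph{family} $\{R_i\}$ of small rectangles, each carrying a piece $G_i\subset R_i$ where $\rho$ is nearly constant, with careful discarding of bad sets with respect to both $\mu$ and $\lambda$. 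This tiling is exactly what your argument is missing: you work with a single $R$, and a single $R$ of small diameter covers only a small fraction of the space, so your estimate $\mu(B\mid A)\approx\mu(B\mid R)\approx\mu(B)$ controls only the part of each atom that lands in $R$, not the bulk.

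Second, the "large join" fix for atoms not contained in $R$ does not close this gap. Ergodicity guarantees that most orbits visit $R$ within some time window, but the visit time varies from atom to atom, and the product structure only produces independence of stable and unstable data relative to $R$ \emph{at the moment of the visit}; there is no obvious way to paste these time-shifted independences together into a single uniform bound on conditional measures. The paper sidesteps this entirely by never attempting a TV bound: instead it builds a map $\theta$ (Lemma \ref{lem: second half}) sliding points along local stable sets within each $R_i$, exploiting the product form of $\lambda_i$ so that $\theta$ is exactly $\lambda_0$-preserving and then (via Proposition \ref{prop: perturbation of measure preserving}) only $\ep_0$-measure-preserving for $\mu$. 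Because $\theta(x)\in W^s_{\mathrm{loc}}(x)$, the symbol histories of $x$ and $\theta(x)$ agree forward in time apart from a small boundary set, which is precisely what $h(x,\theta(x))<\ep$ and hence $\bar d$-closeness require. This is weaker than the TV-closeness you aim for --- your argument, if completed, would actually prove Weak Bernoulli --- but it is what is achievable directly from the hypotheses, and it is where the Chernov--Haskell/Ornstein--Weiss machinery does the real work. To repair your proposal you would essentially need to rediscover the tiling of Proposition \ref{prop: creation of general covering} and to replace the TV estimate with an explicit coupling; at that point you would have reproduced the paper's argument.
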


Note that it is possible to drop the fully supported condition, provided one can show local product structure of $\mu$ at almost every $x\in X$, not just a single open set. A close variant of this result is possibly expected by experts; in particular, when the measure in question is equivalent to a non-atomic product measure. However, we believe that the proof has not previously been written down. Indeed, this result is similar in flavor to many existing results establishing Bernoullicity from the $K$-property under various additional assumptions (see also \cite{ledrappier2016ergodic, ponce2018bernoulli} for related results on Bernoullicity in different settings, as well as the excellent book \cite{ponce2019introduction} on settings in which $K$ implies Bernoulli). It is always necessary to have these assumptions, as there are systems that are $K$ but not Bernoulli \cite{ornstein1973KnotBernoulli, katok1980smooth, kanigowski2018non}. In our work, we will refer to our main additional assumption as \textit{measurable local product structure}. We define this to mean that on open neighborhoods, the measure is absolutely continuous with respect to a non-atomic product measure on local stable and unstable sets; see Definition \ref{defn: lps} for the precise definition. After the initial version of this paper was posted, Alansari posted work \cite{alansari2022ergodic} addressing similar questions. There are two major differences. We consider non-smooth systems, but require a version of uniform hyperbolicity, while Alansari considers only smooth systems, but allows non-uniform hyperbolicity. It seems likely that these two approaches could be fruitfully combined.

\begin{rem}\label{rem: 1}
We stress that in this paper, we only need our measure to be absolutely continuous, not equivalent, to a product measure for us to say it has measurable local product structure. Furthermore, we include the word \textit{measurable} to indicate that the Radon-Nikodym derivative need not be continuous.
\end{rem}

Our proof follows the classical approach of Chernov and Haskell \cite{chernov1996nonuniformly} and Ornstein and Weiss \cite{ornstein1973geodesic, ornstein1998bernoulli} in that we show the existence of Very Weak Bernoulli partitions. The major difference is that we do not require our measure to be smooth. While this approach is not new, applying it to subadditive equilibrium states provides a different perspective in studying ergodic properties of such measures beyond locally constant cocycles. Moreover, we have simplified the relevant parts of the proof according to the fact that the base dynamical system is uniformly hyperbolic, as opposed to non-uniformly hyperbolic in \cite{chernov1996nonuniformly}.

The paper is organized as follows. In $\S$\ref{sec: prelim}, we introduce relevant notations and survey preliminary results. In $\S$\ref{sec: LPS}, we establish measurable local product structure on shift-invariant measures with a submultiplicative property, and in $\S$\ref{sec: Bernoulli} we prove an abstract theorem on Bernoullicity which implies Theorem \ref{thm: B}.
\\\\
\noindent\textbf{Acknowledgments:} We would like to thank Dan Thompson and Amie Wilkinson for helpful discussion and valuable comments on this paper. We would also like to thank the anonymous reviewers for extremely helpful comments. This work was partially supported by NSF grants DMS-1954463 and DMS-2303333, as well as a Presidential Fellowship from The Ohio State University.

\section{Preliminaries}\label{sec: prelim}

\subsection{Metric Anosov systems}\label{subsec:metric Anosov}
Smale spaces, or metric Anosov systems, are metric analogues of Anosov diffeomorphisms. In particular, they include shifts of finite type, to which we will apply our results later. We do not fully develop the theory in this section, instead referring to \cite{Putnam} for more information.

\begin{defn}
	Let $(X,d)$ be a compact metric space and $f : X\to X$ a homeomorphism. $(X,d,f)$ is a metric Anosov system if there exist constants $\eps_X > 0$, $0 < \chi < 1$, and a continuous bracket operation $[\cdot,\cdot] : \{(x,y) \mid d(x,y) \leq \eps_X\}\to X$ with the following properties:
	\begin{enumerate}
		\item $[x,x] = x$
		\item $[x,[y,z]] = [x,z]$ (when this is defined)
		\item $[[x,y],z] = [x,z]$ (when this is defined)
		\item $[f(x),f(y)] = f([x,y])$ (when this is defined)
		\item If $[y,x] = x = [z,x]$, then $d(f(y),f(z)) \leq \chi d(y,z)$
		\item If $[x,y] = x = [x,z]$, then $d(f^{-1}(y),f^{-1}(z)) \leq \chi d(y,z)$.
	\end{enumerate}
\end{defn}

Heuristically, one should think of the bracket operation as encapsulating the local product structure of Anosov systems, in which we can write $[x,y] = W_{\operatorname{loc}}^s(x)\cap W_{\operatorname{loc}}^u(y)$ as the intersection of local stable and unstable manifolds. With this in mind, we introduce the analogue of local stable and unstable manifolds.

\begin{defn}
For all $0 < \varepsilon \leq \eps_X$, define the local stable and unstable sets by
	$$X^s(x,\varepsilon) = \{y\mid d(y,x) \leq \varepsilon, [y,x] = x\}$$
	and
	$$X^u(x,\varepsilon) = \{y\mid d(y,x) \leq \varepsilon, [x,y] = x\}.$$
\end{defn}

With this definition, one can see that for all $y\in X^s(x,\eps_X)$, $d(f^ny,f^nx) \leq \chi^nd(x,y)$, and for $y\in X^u(x,\eps_X)$, $d(f^{-n}y,f^{-n}x) \leq \chi^nd(x,y)$. Furthermore, these behave nicely with the bracket operation, and there exists $\eps' > 0$ such that $[\cdot, \cdot]$ is a homeomorphism onto its image from $X^s(x,\eps')\x X^u(x,\eps')$. Moreover, the bracket operation behaves much as we would hope with respect to $f$.

\begin{prop}\label{prop: images of stables}
	For all $\varepsilon \leq \frac{\eps_X}{2}$,
	$$(f\times f)(X^s(x,\varepsilon)\x X^u(y,\chi\varepsilon)) \subset X^s(fx,\varepsilon)\x X^u(fy,\varepsilon)$$
	and
	$$(f\times f)^{-1}(X^s(x,\chi\varepsilon)\x X^u(y,\varepsilon))\subset X^s(f^{-1}x,\varepsilon)\x X^u(f^{-1}y,\varepsilon).$$
\end{prop}

\begin{proof}
Given $w\in X^s(x,\varepsilon)$, we observe first that $d(f(w),f(x)) \leq d(w,x)$, and so we can compute the following bracket operation:
$$[f(w),f(x)] = f([w,x]) = fx.$$
Therefore, $f(w)\in X^s(fx,\varepsilon)$. Similar arguments give the rest of the necessary inclusions.
\end{proof}

The final fact that we will need is that the bracket operation is still defined on the image of a set under $f^n$ for $\abs{n}$ large, provided that the diameter of the original set is appropriately small.

\begin{prop}\label{prop: rectangles under pushforwards}
For all $n\in \mathbb{Z}$, there exists $\eps_n > 0$ such that the bracket operation is well-defined on $f^nX^s(x,\varepsilon)\times f^nX^u(x,\varepsilon)$ for all $\varepsilon \leq \eps_n$.
\end{prop}

\subsection{Symbolic dynamical systems}

Let $T$ be a $q \times q$ square-matrix with entries in $\{0,1\}$. We define $\Sig \subset \{1,\cdots, q\}^\Z$ to be the set of all bi-infinite sequences $(x_i)_{i\in \Z}$ satisfying $T_{x_i,x_{i+1}}=1$ for all $i\in \Z$. The left shift map $\s\colon \Sig \to \Sig$ is defined as $(\s x)_i = x_{i+1}$ for $x = (x_i)_{i\in \Z}$ and $i\in \Z$. 
We denote the corresponding one-sided subshift by $\Sig^+ \subset  \{1,\cdots, q\}^{\N_0}$. The left shift map, again denoted by $\s$, is defined analogously with $i \in \N_0$. We denote the one-sided subshift on negative symbols as $\Sig^- \subset \{1,\cdots, q\}^{-\N_0} $ with the shift map given by $\s^{-1}$.

An \textit{admissible word of length $n$} is a word $i_0\ldots i_{n-1}$ with $i_j \in \{1,\ldots,q\}$ such that $T_{i_j,i_{j+1}} = 1$ for all $0 \leq j \leq n-2$.
Let $\L$ be the collection of all admissible words. For $\I\in \L$, we denote its length by $|\I|$.
For each $n \in \N$, let $\L(n) \subset \L$ be the set of all admissible words of length $n$. For any $\I =  i_0\ldots i_{n-1} \in \L(n)$, we define the associated \textit{cylinder} by
$$[\I]=[ i_0\ldots i_{n-1}]:=\{y \in \Sig \colon y_j = i_j \text{ for all } 0 \leq j \leq n-1\}.$$
For any $x=(x_i)_{i \in \Z}\in \Sig$ and $n\in \N$ we denote by $[x]_n:=[x_0\ldots x_{n-1}]$ the $n$-cylinder containing $x$.

We endow $\Sig$ with the metric $d$ defined as follows: for $x = (x_i)_{i \in \Z},y = (y_i)_{i \in \Z} \in \Sig$, we have
$$d(x,y)  = 2^{-k},$$ 
where $k$ is the largest natural number such that $x_i = y_i$ for all $|i| < k$. This makes $\Sig$ into a compact metric space, and $(\Sig, \s)$ into a Smale space with $\eps_{\Sigma_T} = 1$.

Given $x\in\Sig$, the \textit{local stable set} of $x$ at scale $2^{-k}$ for $k \in\mathbb{N}$ is
$$X^s(x,2^{-k}) = \{y\in\Sig\mid x_i = y_i \text{ for } i\geq -k \}$$
and analogously, the \textit{local unstable set} of $x$ at scale $2^{-k}$ is
$$X^u(x,2^{-k}) = \{y\in\Sig\mid x_i = y_i \text{ for } i\leq k\}.$$
When $k = 0$, we will write $\Wloc^s(x) := X^s(x,1)$ and $\Wloc^u(x) := X^u(x,1)$.

Finally, we will always assume that the adjacency matrix $T$ is \textit{primitive}, meaning that there exists $N>0$ such that all entries of $T^N$ are positive. The primitivity of $T$ is equivalent to $(\Sig,\s)$ being topologically mixing.

\subsection{Subadditive thermodynamic formalism}\label{subsec: symbolic}
Let $\Phi = \{\log \vp_n\}_{n\in \N}$ be a sequence of continuous real-valued functions on $\Sig$. We say $\Phi$ is \textit{subadditive} if 
$$\log \vp_{m+n} \leq \log \vp_n + \log \vp_m \circ \s^n$$
for every $m,n\in \N$. Often we will refer to $\Phi$ as a \textit{subadditive potential} on $\Sig$.

For every subadditive potential $\Phi$, we can define its subadditive pressure $P(\Phi)$; we refer the readers to \cite{brin2002introduction} for its definition and further discussion. While this can be defined without reference to any measures, it has been shown to have an equivalent formulation. This is known as the \textit{subadditive variational principle} established as \cite[Theorem 1.1]{cao2008thermodynamic}:
$$P(\Phi)=\sup\limits_{\mu \in \M(\s)} \Big(h_\mu(\s)+ \lim\limits_{n\to \infty} \frac{1}{n}\int \log \vp_n \,d\mu \Big)$$
In this formulation, $\M(\sigma)$ denotes the set of $\sigma$-invariant probability measures on $X$, and any $\mu\in\M(\sigma)$ attaining the supremum is called an \textit{equilibrium state}.

We say that a probability measure $\mu$ on $\Sig$ satisfies the \textit{Gibbs property} with respect to $\Phi$ if there exists a constant $C\geq 1$ such that for any $x\in \Sig$ and $n\in \N$, we have
$$C^{-1} \leq \frac{\mu([x]_n)}{e^{-nP(\Phi)} \vp_n(x)} \leq C.$$

We now introduce properties that will be satisfied by all subadditive potentials considered in this paper. First, we say $\Phi$ has \textit{bounded distortion} if there exists $C \geq 1$ such that for any $x\in \Sig$, $n\in\N$, and $y \in [x]_n$, we have
$$C^{-1} \leq \frac{\vp_n(x)}{\vp_n(y)} \leq C,$$
which going forward we will write as $\frac{\vp_n(x)}{\vp_n(y)}\asymp C$. 

\begin{rem}\label{rem: identification}
For each $\I\in \L(n)$, we define 
$$\vp(\I):=\sup\limits_{x \in [\I]} \vp_n(x).$$
Using this, $\log \vp$ defines a subadditive potential on $\L$ (i.e., $\log\vp(\I\J) \leq \log\vp(\I) + \log\vp(\J))$ for $\I,\J\in\L$. By an abuse of notation, we shall denote this again by $\Phi$. 
Conversely, given a subadditive potential $\Phi:=\log \vp$ on $\L$, we can induce on $\Sig$ a subadditive potential, denoted again by $\Phi$, by setting $\vp_n(x) := \vp([x]_n)$. 

We will identify such potentials (on $\Sig$ and on $\L$) with one another; this identification will simplify the presentation of the preliminary results described below.
Moreover, relevant objects such as the subadditive pressure and the set of equilibrium states remain unchanged under this identification if the original subadditive potential $\Phi = \{\log \vp_n\}$ on $\Sig$ has bounded distortion; see \cite{park2019quasi}.
\end{rem}

The last property we will make use of is called quasi-multiplicativity. We say a subadditive potential $\Phi=\log \vp$ on $\L$ is \textit{quasi-multiplicative} if there exist $c>0$ and $k\in \N$ such that for any $\I,\J \in \L$ there exists $\K \in \L$ with $|\K| \leq k$ so that
$$ \vp(\I\K\J) \geq c \vp(\I)\vp(\J).$$
 The following proposition whose proof can be found in \cite[Theorem 5.5]{feng2011equilibrium} guarantees that quasi-multiplicativity is a sufficient condition for the uniqueness of the equilibrium state.
  
\begin{prop}\label{prop: qm}
Let $\Phi$ be a subadditive potential on $\L$. If $\Phi$ is quasi-multiplicative, then it has a unique equilibrium state $\mu \in \M(\s)$. Moreover, $\mu$ has the Gibbs property with respect to $\Phi$.
\end{prop}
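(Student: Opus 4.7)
The plan is to construct the unique equilibrium state directly as a Gibbs measure built from the combinatorial data $\{\vp(\I)\}_{\I \in \L}$, and then deduce uniqueness from the Gibbs property together with quasi-multiplicativity.

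First, I would set $Z_n := \sum_{\I \in \L(n)} \vp(\I)$. Subadditivity of $\Phi$ on $\L$ gives $Z_{n+m} \leq Z_n Z_m$, so Fekete's lemma produces $P := \lim \tfrac{1}{n} \log Z_n$, and a standard variational identification yields $P = P(\Phi)$. Quasi-multiplicativity then provides a reverse bound: for any $\I \in \L(n)$ and $\J \in \L(m)$, there is $\K(\I,\J) \in \L$ with $|\K| \leq k$ and $\vp(\I\K\J) \geq c\vp(\I)\vp(\J)$. Summing over $\I,\J$ and using that each word in $\L(n+l+m)$ is hit by only boundedly many pairs, one obtains $\sum_{l=0}^{k} Z_{n+l+m} \geq c' Z_n Z_m$, hence $Z_n \asymp e^{nP}$ with multiplicative constants independent of $n$.

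Next, I would build the candidate measure. For each $n$, define $\nu_n$ by assigning mass $\vp(\I)/Z_n$ to a chosen point in $[\I]$ for each $\I \in \L(n)$, and take a weak-$*$ subsequential limit $\mu$ of the Ces\`aro averages $\mu_N := \tfrac{1}{N}\sum_{n=0}^{N-1}\s^n_* \nu_n$, which is automatically $\s$-invariant. For the Gibbs property, the upper bound $\mu([\I]) \lesssim \vp(\I)\,e^{-nP}$ comes from subadditivity applied to words extending $\I$ together with the asymptotics of $Z_n$; the lower bound uses quasi-multiplicativity, since for every $\J \in \L(m)$ a short $\K = \K(\I,\J)$ gives $\vp(\I\K\J) \geq c\vp(\I)\vp(\J)$, which lets one compare $\nu_{n+|\K|+m}(\s^{-|\K|-n}[\J]\cap [\I])$ with $\vp(\I) Z_m / Z_{n+|\K|+m} \asymp \vp(\I)\,e^{-nP}$. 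Averaging this over $m \leq N$ and passing to the weak-$*$ limit yields the two-sided Gibbs bound for $\mu$.

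For uniqueness and the equilibrium property, the Gibbs bounds allow a Shannon--McMillan-type computation to show $h_\mu(\s) + \lim \tfrac{1}{n}\int \log \vp_n\,d\mu = P(\Phi)$, so $\mu$ is an equilibrium state. Conversely, any equilibrium state $\nu$ must, by the upper Gibbs bound (which follows from subadditivity alone for any invariant measure) and a variational comparison with the known rate $P$, also satisfy the lower Gibbs bound; thus $\mu$ and $\nu$ are mutually absolutely continuous with densities in a fixed interval $[C^{-1},C]$. Quasi-multiplicativity also implies a mixing property on the cylinder algebra, so $\mu$ is ergodic, and any such invariant density must be constant, giving $\nu = \mu$.

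I expect the main obstacle to be the lower Gibbs bound on $\mu$: one must carefully manage that the connector $\K(\I,\J)$ depends on both $\I$ and $\J$ and has variable length $\leq k$, so the counting argument has to absorb finitely many generations simultaneously and then survive passage through the Ces\`aro weak-$*$ limit. Once the Gibbs property is in hand, the equilibrium and uniqueness statements follow from rigidity of Gibbs measures for a single subadditive potential.
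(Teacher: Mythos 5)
The paper does not prove this proposition itself; it is quoted from \cite[Theorem~5.5]{feng2011equilibrium}. Your outline follows the same broad strategy as that source (and Bowen's original Gibbs construction): bound the partition functions $Z_n$ above via subadditivity and below via quasi-multiplicativity, take a weak-$*$ limit of Ces\`aro averages of the normalized cylinder measures to get an invariant Gibbs measure $\mu$, and deduce that $\mu$ is the unique equilibrium state. You also correctly flag the main bookkeeping hazard in establishing the lower Gibbs bound, namely the variable-length connector $\K(\I,\J)$ with $|\K|\leq k$.

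There is, however, a genuine gap in your uniqueness step. You write that the upper Gibbs bound ``follows from subadditivity alone for any invariant measure'' and then conclude that every equilibrium state $\nu$ is mutually absolutely continuous with $\mu$ with density in a fixed interval $[C^{-1},C]$. That parenthetical is false: for an arbitrary invariant measure, e.g.\ one supported on a single periodic orbit, the pointwise bound $\nu([\I])\leq Ce^{-|\I|P(\Phi)}\vp(\I)$ fails, because along the orbit $\nu([\I])$ stays bounded below while $e^{-|\I|P(\Phi)}\vp(\I)$ decays exponentially (since $h_\nu=0$ forces $\lim\frac1n\int\log\vp_n\,d\nu<P(\Phi)$). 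Even for an equilibrium state, Shannon--McMillan--Breiman together with Kingman gives only the almost-sure vanishing of $\frac1n\log\frac{\nu([x]_n)}{e^{-nP(\Phi)}\vp_n(x)}$, not a uniform two-sided bound, so the claimed mutual absolute continuity with bounded density does not follow. The correct route uses the \emph{already-constructed} Gibbs measure $\mu$ as reference: by the cross-entropy inequality and the lower Gibbs bound on $\mu$, for any invariant $\nu$,
$$-\sum_{\I\in\L(n)}\nu([\I])\log\nu([\I]) \;\leq\; -\sum_{\I\in\L(n)}\nu([\I])\log\mu([\I]) \;\leq\; nP(\Phi)-\int\log\vp_n\,d\nu+\log C,$$
which recovers $h_\nu+\lim\frac1n\int\log\vp_n\,d\nu\leq P(\Phi)$. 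Uniqueness then requires the equality-case analysis (restricting to an ergodic component, comparing $\nu([x]_n)$ against $\mu([x]_n)$ pointwise, and using ergodicity of $\mu$), which is the real content of the argument and is not subsumed by the parenthetical claim you made.
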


\subsection{Ergodic properties of an invariant measure}\label{subsec: ergodic properties}
In this subsection, let $(X,f,\mathcal{B})$ be a homeomorphism on a compact metric space. We provide a brief introduction to the various mixing properties we use throughout this paper.

Given a Borel invariant measure $\mu \in \M(f)$, we say $\mu$ is \textit{totally ergodic} if it is ergodic with respect to $f^n$ for all $n\in \N$. Total ergodicity is stronger than ergodicity, but weaker than mixing.

A stronger mixing property is the Kolmogorov property (the \textit{$K$-property}, for short), which is stronger than mixing of all orders but weaker than Bernoulli. It has many equivalent formulations, and the following definition (known as \emph{completely positive entropy}) is an equivalence established by Rokhlin and Sinai.
 
\begin{defn}\label{defn: K}
An invariant measure $\mu \in \M(f)$ has the \textit{$K$-property} if its Pinsker factor (i.e., maximal zero entropy factor) is trivial.
\end{defn}
The authors have shown in a previous paper that under the conditions below, total ergodicity implies the $K$-property:

\begin{prop}\cite{call2020k}\label{prop: CP}
	Let $\Phi$ be a subadditive potential on $\Sig$, and suppose it is quasi-multiplicative and has bounded distortion. If its unique equilibrium state is totally ergodic, then it has the $K$-property.
\end{prop}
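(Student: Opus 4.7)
The plan is to establish the $K$-property by deriving rough correlation estimates from the Gibbs property and quasi-multiplicativity, upgrading these to strong mixing with the help of total ergodicity, and finally passing from mixing to $K$ via exactness of the one-sided factor.

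For the correlation estimates, Proposition \ref{prop: qm} gives the Gibbs inequality $\mu([\I]) \asymp e^{-|\I|P(\Phi)}\vp(\I)$. Combining this with quasi-multiplicativity --- which provides, for every admissible $\I, \J$, a connecting word $\K$ with $|\K| \leq k$ and $\vp(\I\K\J) \geq c\,\vp(\I)\vp(\J)$ --- one obtains
\[
c'\,\mu([\I])\mu([\J]) \;\leq\; \mu([\I]\cap \s^{-n}[\J]) \;\leq\; C'\,\mu([\I])\mu([\J])
\]
for all sufficiently large $n \geq |\I| + k$. The lower bound comes from singling out the cylinder $[\I\K\J]$ in the decomposition $[\I]\cap\s^{-n}[\J] = \bigsqcup_{\K'} [\I\K'\J]$ and applying Gibbs; the upper bound follows from submultiplicativity $\vp(\I\K'\J) \leq \vp(\I)\vp(\K')\vp(\J)$ together with $\sum_{\K' \in \L(m)} e^{-mP(\Phi)}\vp(\K') \asymp 1$.

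Next I would use this rough mixing estimate, together with total ergodicity, to conclude strong mixing of $\mu$. The Koopman operator $U_\s$ on $L^2(\mu)$ has $1$ as a simple eigenvalue by ergodicity. If $U_\s f = \la f$ with $|\la|=1$, $\la \neq 1$, and $f$ non-constant, approximating $f$ in $L^2$ by cylinder functions and using the lower correlation bound forces $\la$ to be a root of unity: the oscillatory behavior of $\la^n \langle f\chi_{[\I]}, f\chi_{[\I]}\rangle = \langle f\chi_{[\I]}, U_\s^n(f\chi_{[\I]})\rangle$ is incompatible with the uniform positivity supplied by the rough mixing bound otherwise. Total ergodicity then excludes roots of unity other than $1$, so $U_\s$ has no nontrivial unit-circle eigenvalues, and combined with the upper correlation bound this yields $\mu([\I]\cap\s^{-n}[\J]) \to \mu([\I])\mu([\J])$, i.e., strong mixing.

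Finally, I would transfer mixing to $K$ via exactness of the one-sided factor. Let $\pi \colon \Sig \to \Sig^+$ denote the canonical projection and set $\mu^+ := \pi_*\mu$. For any tail event $A \in \bigcap_n \s^{-n}\B^+$, write $A = \s^{-n}A_n$ for each $n$; mixing gives $\mu([\I]\cap A) = \mu([\I]\cap\s^{-n}A_n) \to \mu([\I])\mu(A)$ as $n \to \infty$, so $A$ is $\mu$-independent of every cylinder and hence $\mu(A)\in\{0,1\}$. Thus $(\Sig^+,\s,\mu^+)$ is exact, and since $(\Sig,\s,\mu)$ is its natural extension, $\mu$ has the $K$-property. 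The main obstacle is the spectral step: unlike the H\"older case, where Ruelle--Perron--Frobenius provides an immediate spectral gap, the subadditive setting only delivers comparability-type correlation bounds, and extracting spectral rigidity from these requires carefully exploiting the uniqueness of the equilibrium state (any nontrivial unit-circle eigenvalue would produce a measurable cyclic decomposition of $\Sig$ from which one could build distinct equilibrium states) together with total ergodicity.
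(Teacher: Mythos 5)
This result is quoted from \cite{call2020k}; the present paper contains no proof of it, so I evaluate your argument on its own terms. Your opening step (two\nobreakdash-sided correlation bounds from the Gibbs property and quasi\nobreakdash-multiplicativity) is essentially right, up to one technicality you should not skip: quasi\nobreakdash-multiplicativity produces a connecting word $\K$ of length \emph{at most} $k$, whereas the decomposition $[\I]\cap\s^{-n}[\J]=\bigsqcup[\I\K'\J]$ requires a connecting word of length \emph{exactly} $n-|\I|$. One must first upgrade to a uniform version of quasi\nobreakdash-multiplicativity in which the connecting word can be taken of any prescribed length above a fixed threshold; this is standard but needs to be stated.

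The two later steps have genuine gaps. At the end of your second paragraph you claim that ruling out nontrivial unit\nobreakdash-circle eigenvalues (weak mixing) ``combined with the upper correlation bound'' yields strong mixing. No argument is given, and none is apparent: a two\nobreakdash-sided bound $c'\mu([\I])\mu([\J])\leq\mu([\I]\cap\s^{-n}[\J])\leq C'\mu([\I])\mu([\J])$ controls the \emph{size} of correlations but does not force them to converge, and weak mixing gives convergence only along a density\nobreakdash-one subsequence. (This is exactly why Morris's proof that total ergodicity implies mixing for irreducible locally constant cocycles was a substantial piece of work.) Your spectral heuristic is also shaky: $\langle f\chi_{[\I]}, U_\s^n(f\chi_{[\I]})\rangle = \overline{\la^n}\int_{[\I]\cap\s^{-n}[\I]}|f|^2\,d\mu$, and a complex number of modulus bounded away from $0$ multiplied by $\overline{\la^n}$ presents no contradiction for irrational $\la$. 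The more fatal gap is the final step: you assert strong mixing implies exactness of the one\nobreakdash-sided factor. But when you write $A=\s^{-n}A_n$ and invoke ``mixing gives $\mu([\I]\cap\s^{-n}A_n)\to\mu([\I])\mu(A)$,'' the set $A_n=\s^n A$ varies with $n$, so the definition of mixing does not apply; indeed $\mu([\I]\cap\s^{-n}A_n)$ is constant in $n$ (it equals $\mu([\I]\cap A)$), which is precisely the quantity you are trying to evaluate. Mixing does not imply the $K$\nobreakdash-property, so this step cannot be repaired within the stated framework.

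The fix is to bypass mixing entirely and use the uniform correlation bound directly on the tail. The bound extends by approximation from cylinders $[\J]$ to all $B\in\B^+$, so for a tail event $A$ (write $A=\s^{-n}A_n$ with $A_n=\s^n A$ and apply the bound with $B=A_n$) one obtains $c'\mu([\I])\mu(A)\leq\mu([\I]\cap A)\leq C'\mu([\I])\mu(A)$ for every cylinder $[\I]$, hence $\mu(A)\asymp\mu(A)^2$. Thus every tail event has measure $0$ or at least $1/C'$, so the Pinsker factor (which, by Rokhlin--Sinai, coincides with the tail $\s$\nobreakdash-algebra) is finite and, being an ergodic factor, is a rotation on a finite cyclic group. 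Total ergodicity then forces that group to be trivial, giving the $K$\nobreakdash-property. This is the route taken in \cite{call2020k}.
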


In this paper, the goal is to show that under these hypotheses, the underlying measure is Bernoulli.
In order to show this implication, we will make use of another one of the equivalent formulations for the $K$-property as described in the following proposition; see \cite[Section 2.8]{ponce2019introduction}. In order to state it, we first need an additional definition.

\begin{defn}\label{defn: eps almost everywhere}
Given a measurable partition $\xi$ of $X$ and a probability measure $\mu$ on $X$, a property holds \textit{$\ep$-almost everywhere} on $\xi$ if the union of atoms on which the property does not hold has $\mu$-measure at most $\ep$.
\end{defn}

\begin{prop}\label{prop: K-partitions}
For every finite measurable partition $\xi$ of a $K$-system $(X,\mathcal{B},\mu,f)$, for all $\ep > 0$ and finite collection of sets $A_i\in \mathcal{B}$, there exists $N\in \N$ such that for all $m_2 \geq m_1 \geq N$, for $\ep$-almost every $E\in \bigvee_{m_1}^{m_2}f^i\xi$ and each $A_i$, 
$$\left|\frac{\mu(E\cap A_i)}{\mu(E)} - \mu(A_i)\right| \leq \ep.$$
\end{prop}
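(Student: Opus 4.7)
The plan is to recognize the proposition as an $L^1$-type convergence statement for conditional expectations, which can then be controlled by the triviality of the tail $\sigma$-algebra of $\xi$ guaranteed by the $K$-property. Write $\xi_{m_1}^{m_2} := \bigvee_{i=m_1}^{m_2} f^i \xi$ and $\mathcal{F}_{m_1}^{m_2} := \sigma(\xi_{m_1}^{m_2})$. For each set $A_i$ in the given finite collection, the function $g_i := \mathbb{E}[\mathbf{1}_{A_i} \mid \mathcal{F}_{m_1}^{m_2}]$ is constant on each atom $E \in \xi_{m_1}^{m_2}$ with value $\mu(E \cap A_i)/\mu(E)$, so the conclusion is equivalent to showing that $|g_i - \mu(A_i)| \le \ep$ holds outside a set of $\mu$-measure at most $\ep$.

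The main input I would use is the classical equivalence (Rokhlin--Sinai) between the triviality of the Pinsker factor and the triviality of the tail $\sigma$-algebra $\mathcal{F}_\infty := \bigcap_{n \ge 0} \mathcal{F}_n^\infty$ for every finite partition $\xi$, where $\mathcal{F}_n^\infty := \sigma\bigl(\bigvee_{i \ge n} f^i \xi\bigr)$. With $\mathcal{F}_\infty$ trivial mod $\mu$, the backward martingale convergence theorem applied to the decreasing filtration $\{\mathcal{F}_n^\infty\}_{n \ge 0}$ gives
\[
\mathbb{E}[\mathbf{1}_{A_i} \mid \mathcal{F}_n^\infty] \xrightarrow[n \to \infty]{L^1(\mu)} \mu(A_i)
\]
for each $A_i$ in the collection. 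Since $\mathcal{F}_{m_1}^{m_2} \subset \mathcal{F}_{m_1}^\infty$ whenever $m_2 \ge m_1$, the tower property combined with the $L^1$-contractivity of conditional expectation yields
\[
\|g_i - \mu(A_i)\|_{L^1(\mu)} \le \|\mathbb{E}[\mathbf{1}_{A_i} \mid \mathcal{F}_{m_1}^\infty] - \mu(A_i)\|_{L^1(\mu)},
\]
so the left-hand side can be made arbitrarily small uniformly in $m_2 \ge m_1$ by taking $m_1$ sufficiently large.

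To finish, let $k$ be the cardinality of the finite collection $\{A_i\}$. Choose $N$ so that the right-hand side of the preceding display is at most $\ep^2/k$ for every $i$ whenever $m_1 \ge N$, which is possible by the $L^1$ convergence and the finiteness of the collection. For any $m_2 \ge m_1 \ge N$, Markov's inequality applied to each $|g_i - \mu(A_i)|$ and a union bound over $i$ show that the set $\{x : |g_i(x) - \mu(A_i)| > \ep \text{ for some } i\}$ has $\mu$-measure at most $\ep$. Since each $g_i$ is constant on atoms of $\xi_{m_1}^{m_2}$, this bad set is a union of atoms, giving the desired conclusion. The only non-routine ingredient is the Rokhlin--Sinai equivalence used above; the remainder is a standard application of backward martingale convergence and Markov's inequality, so I expect the only real work to be a careful citation or statement of this equivalence, which is well documented in the references on $K$-systems cited in the paper.
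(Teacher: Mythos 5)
Your argument is correct. The paper itself gives no proof of this proposition, merely citing \cite[Section 2.8]{ponce2019introduction} for it as one of the standard equivalent formulations of the $K$-property, so there is nothing in the paper's text to compare against line by line. Your route is the standard one: the observation that $\mu(E\cap A_i)/\mu(E)$ is the value of $\mathbb{E}[\mathbf{1}_{A_i}\mid\sigma(\bigvee_{m_1}^{m_2}f^j\xi)]$ on the atom $E$, the Rokhlin--Sinai fact that completely positive entropy forces the tail $\sigma$-algebra $\bigcap_{n}\sigma(\bigvee_{j\ge n}f^j\xi)$ to be trivial for every finite partition $\xi$, backward martingale convergence in $L^1$, the tower property with $L^1$-contractivity of conditional expectation to get a bound uniform in $m_2\ge m_1$, and Markov plus a union bound to pass from an $L^1$ estimate to an ``$\ep$-almost every atom'' estimate. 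Each step is justified and the quantifier bookkeeping (choosing the $L^1$ bound $\ep^2/k$ so the bad set over all $i$ has measure at most $\ep$) is handled correctly; since the $g_i$ are constant on atoms, the bad set really is a union of atoms as required. One small remark: it is worth being explicit that the Rokhlin--Sinai input you need is the inclusion of the tail $\sigma$-algebra of an \emph{arbitrary} finite partition inside the Pinsker factor (not just for generating partitions), which is indeed part of the classical theorem; an equivalent and slightly more self-contained route used in some references is to invoke uniform $K$-mixing ($\sup_{B\in\sigma(\bigvee_{j\ge n}f^j\xi)}|\mu(A\cap B)-\mu(A)\mu(B)|\to 0$) and then split the bad atoms into those where the ratio overshoots and those where it undershoots, applying the mixing bound to each union directly. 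That variant avoids martingale theory but needs the uniform form of $K$-mixing; your version trades that for backward martingale convergence. Both are valid and of comparable length.
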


Finally, we give a formal definition of the Bernoulli property.
\begin{defn}\label{defn: B}
 Given a finite set $S$ equipped with a probability measure $\nu$, we say the dynamical system $(S^{\mathbb{Z}},\s,\nu^{\mathbb{Z}})$ is a \textit{Bernoulli shift}. We say that $(X,f,\mathcal{B},\mu)$, where $\mu\in\M(f)$ is an invariant measure, is \textit{Bernoulli} if $(X,f,\mathcal{B},\mu)$ is measurably isomorphic to a Bernoulli shift.
\end{defn}

\subsection{Matrix cocycles}\label{subsec: cocycles}

Throughout the paper, we will assume that the norm on $\glr$ is the operator norm $\|\cdot \|$.
Let $\A \colon \Sig \to \glr$ be an $\alpha$-\hol continuous $\glr$-valued function. Such a function induces a matrix cocycle (which we also denote by) $\A \colon \Sig \times \N \to \glr$ where
$$\A(x,n):=\A(\s^{n-1}x) \ldots \A(x).$$
We will often denote $\A(x,n)$ by $\A^n(x)$, and such a cocycle $\A$ satisfies a \textit{cocycle identity} $\A(x,n+m) = \A(\s^nx,m)\A(x,n)$ for all $m,n \in \N$. Using another function $\A^{-1}$ over $(\Sig,\s^{-1})$ defined by $$\A^{-1}(x):=\A(\s^{-1}x)^{-1},$$
we can likewise define 
$\A^{-n}(x)$, which coincides with $\A^n(\s^{-n}x)^{-1}$, and extend our cocycle $\A$ over $\Sig  \times \Z$, satisfying the cocycle identity for all $m,n \in \Z$.

From the submultiplicativity of the norm $\|\cdot \|$, the sequence of functions $\Phi_\A:=\{\log \vp_{\A,n}\}$ given by $$\vp_{\A,n}(x):=\|\A^n(x)\|$$ defines a subadditive potential. We call this the \textit{norm potential} of $\A$.
\begin{rem}
Often in the literature, the norm potential refers to a family of subadditive potentials defined as $\{\log \|\A^n(\cdot)\|^t\}_{n\in \N}$ for $t>0$. Under the same assumption on $\A$ from Theorem \ref{thm: A} (1), the same proof can be applied to show that the unique equilibrium state for the potential $\{\log \|\A^n(\cdot)\|^t\}_{n\in \N}$ is Bernoulli.
\end{rem}

We denote the singular values of $A \in \glr$ by $\s_1(A) \geq \ldots \geq \s_d(A)>0$. For $s \in [0,d]$, we define
$$\vp^s(A):= \s_1(A)\ldots \s_{\lfloor s \rfloor}(A)\s_{\lceil s \rceil}(A)^{\{s \}},$$
whereas for $s > d$ we set $\vp^s(A):=|\det(A)|^{s/d}$. 
For any $\A \colon \Sig \to \glr$ and $s \geq 0$, we define the $s$-\textit{singular value potential} as $$\Phi_\A^s:=\{\log \vp_{\A,n}^s\}_{n\in \N} \text{ where } \vp_{\A,n}^s :=\vp^s(\A^n(x)).$$
Since $\|A\| = \s_1(A)$, the $1$-singular value potential $\Phi_\A^1$ coincides with the norm potential $\Phi_\A$. Therefore, singular value potentials are generalizations of norm potentials.

When $\A\ \colon \Sig \to \glr$ depends only on the zero-th symbol $x_0$ of $x=(x_i)_{i\in \N}$, we say that it is \textit{locally constant}. 
When $\A$ is \textit{irreducible}, meaning that there does not exist a proper subspace $V \subset \R^d$ preserved under the action of $\A$, its norm potential $\Phi_\A$ has a unique equilibrium state. In this case, Morris has shown in \cite{morris2018ergodic, morris2019necessary, morris2020totally} that for such equilibrium states total ergodicity implies Bernoullicity.

In this paper we expand Morris' result on Bernoullicity to a class of cocycles that are not necessarily locally constant. Moreover, we apply a fundamentally different technique than Morris, relying on an approach pioneered by Ornstein and Weiss \cite{ornstein1973geodesic, ornstein1998bernoulli}; see Section \ref{sec: Bernoulli} for more details. We will focus on the class of fiber-bunched cocycles, which we define below. 

We say an $\a$-\hol cocycle $\A \colon \Sig \to \glr$ is \textit{fiber-bunched} if
\begin{equation}\label{eq: fb}
\|\A(x)\|\cdot \|\A(x)^{-1}\| <2^\a
\end{equation}
for every $x\in \Sig$. It is clear from the definition that the set of fiber-bunched cocycles forms an open subset of the set of $\a$-\hol cocycles $C^\a(\Sig,\glr)$ and that perturbations of conformal cocycles are fiber-bunched cocycles. In fact, fiber-bunched cocycles may as well be thought of as nearly conformal cocycles.  

The fiber-bunching assumption guarantees the convergence of the canonical holonomies: for $y \in \Wloc^s(x)$, we define
$$H^s_{x,y}:=\lim\limits_{n\to \infty} \A^n(y)^{-1}\A^n(x).$$
The collection $\{H^s_{x,y}\}\subseteq \glr$ is called the \textit{canonical stable holonomy}. The holonomy satisfies the following properties:  for any $y,z \in \Wloc^s(x)$,
\begin{enumerate}
\item $H^s_{x,x} = \id$ and $H^s_{y,z}H^s_{x,y} = H^s_{x,z}$,
\item $\A(y)H^s_{x,y} = H^s_{\s x,\s y }\A(x)$,
\item $(x,y) \mapsto H^s_{x,y}$ is continuous,
\item $\|H^s_{x,y} -\id \| \leq C \cdot d(x,y)^\a$.
\end{enumerate}
The second property says that the holonomy is $\A$-equivariant, and using this property we can define \textit{global stable holonomy} $H^{s}_{x,y}$ for $y \in \W^s(x)$ that does not necessarily lie on $\Wloc^s(x)$. The fourth property says that the stable holonomies are \hol continuous with the same exponent as the cocycle.

The \textit{canonical unstable holonomy} $\{H^u_{x,y}\}$ can likewise be defined using $\A^{-1}$:
$$H^u_{x,y} := \lim\limits_{n\to\infty} \A^{-n}(y)^{-1}\A^{-n}(x).$$
The unstable holonomy satisfies the analogous properties as above with $s$ and $\s$ replaced by $u$ and $\s^{-1}$, respectively. See \cite{kalinin2013cocycles} for a more detailed discussion on the canonical holonomies of fiber-bunched cocycles.

\begin{rem}\label{rem: bdd distortion}
For any fiber bunched cocycle $\A$, the \hol continuity of $H^{s/u}$ implies that the norm potential $\Phi_\A$ and the singular value potential $\Phi_\A^s$ for any $s \geq 0$ have bounded distortion; see \cite{park2019quasi}. This justifies the identification assumed in Remark \ref{rem: identification}.
\end{rem}

We now introduce the irreducibility and the strong bunching assumptions for fiber-bunched cocycles appearing in the first case of Theorem \ref{thm: A}. In the following definition, a bundle $\V$ over $\Sig$ is a continuous map from $\Sig$ into the Grassmannian $Gr(d,r)$ for some $0 < r \leq d$, and a subbundle $\mathbb{W}$ of $\V$ is a continuous map from $\Sig$ to $Gr(d,s)$ for some $0 < s \leq r$ such that $\mathbb{W}(x) \subseteq \V(x)$ for every $x\in \Sig$.
The \textit{bi-holonomy invariance} of a bundle means that $\V_y = H^s_{x,y}\V_x$ for any $y \in \Wloc^s(x)$, and likewise along the unstable holonomy. As the holonomies are $\a$-\hol continuous, any bi-holonomy invariant bundle is also necessarily $\a$-\hol continuous.

\begin{defn}
We say a fiber-bunched cocycle $\A \colon \Sig \to \glr$ is \textit{irreducible} if there does not exist an $\A$-invariant and bi-holonomy invariant subbundle. 
\end{defn}

This definition of irreducibility for fiber-bunched cocycles is compatible with irreducibility for locally constant cocycles: when a fiber-bunched cocycle $\A$ is locally constant, then the canonical holonomies are trivial (i.e., $H^{s/u} \equiv \id$), and hence, the two notions of irreducibility coincide.

\begin{defn}
We say $\A \colon \Sig \to \glr$ is \textit{strongly bunched} if
\begin{enumerate}
\item $d=2$ and $\A$ is fiber-bunched as in \eqref{eq: fb}, or
\item $d\geq 3$ and for every $x\in \Sig$,
$$\|\A(x)\|\cdot \|\A(x)^{-1}\| <2^{\a/3}.$$
\end{enumerate} 
\end{defn}

Notice that when $d=2$, strong bunching coincides with fiber-bunching. It is only when $d\geq 3$ the strong bunching requires a more stringent bound compared to \eqref{eq: fb}.

\begin{rem}The strong bunching assumption was first introduced in Bochi and Garibaldi \cite{bochi2019extremal} in the more general setting of hyperbolic homeomorphisms. The general definition there requires that the product $\|\A(x)\|\|\A(x)^{-1}\|$ is bounded above by a constant depending only on the base dynamical system and the exponent of the cocycle. In the case of a subshift of finite type $(\Sig,\s)$ equipped with the standard metric defined in Subsection \ref{subsec: symbolic}, they showed that such a bound can be taken to be $2^{\a/3}$. 

\end{rem}

We now introduce the typicality assumption appearing in the second case of Theorem \ref{thm: A}.

\begin{defn}Let $\A \colon \Sig \to \glr$ be a fiber-bunched cocycle. \\We say $\A$ is \textit{1-typical} if
\begin{enumerate}
\item there exists a periodic point $p\in \Sig$ such that $P:=\A^{\text{per}(p)}(p)$ has simple eigenvalues of distinct moduli, and
\item there exists a homoclinic point $z \in \W^s(p) \cap \W^u(p) \setminus \{p\}$ such that the holonomy loop $\psi:=H^s_{z,p}H^u_{p,z}$ maps the eigendirections $\{v_1,\ldots,v_d\}$ of $P$ into a general position. 
\end{enumerate}
Moreover, we say $\A$ is \textit{typical} if the exterior product cocycle $\A^{\wedge t}$ is 1-typical for every $1 \leq t \leq d-1$.
\end{defn}

Typicality was first introduced by Bonatti and Viana \cite{bonatti2004lyapunov}. They showed that the Lyapunov exponents of typical cocycles with respect to the unique equilibrium states for \hol potentials are simple. Two assumptions appearing in the definition of typicality are called \textit{pinching} and \textit{twisting}, and their roles parallel those of proximality and strong irreducibility in the classical setting of i.i.d. product of random matrices. 
We also note that typicality is a stronger assumption which implies irreducibility.

The following proposition summarizes the known results on the singular value potentials of the cocycles considered in Theorem \ref{thm: A}.

\begin{prop}\label{prop: cocycles}
 Let $\A \colon \Sig \to \glr$ be a fiber-bunched cocycle. 
\begin{enumerate}
\item If $\A$ is strongly bunched and irreducible, then its norm potential $\Phi_\A$ is quasi-multiplicative.
\item If $\A$ is typical, then its singular value potential $\Phi_\A^s$ is quasi-multiplicative for all $s \geq 0$.
\end{enumerate}
In both cases, Proposition \ref{prop: qm} gives a unique equilibrium state satisfying the Gibbs property.
\end{prop}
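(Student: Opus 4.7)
The plan is to reduce both statements to constructing, for any pair of admissible words $\I,\J \in \L$, a connecting word $\K$ of length uniformly bounded by some $k\in\N$ such that
\[
\vp(\I\K\J) \geq c\,\vp(\I)\,\vp(\J),
\]
where $\vp$ is $\vp_{\A,n}$ in case (1) and $\vp_{\A,n}^s$ in case (2). In both cases the relevant potentials have bounded distortion (Remark \ref{rem: bdd distortion}), so we may replace $\vp(\I)$ and $\vp(\J)$ by $\vp_n$ and $\vp_m$ evaluated at any chosen points $x\in[\I]$, $y\in[\J]$.

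For (1), the strategy follows \cite{park2019quasi} and \cite{bochi2019extremal}. Choose $x\in[\I]$ and $y\in[\J]$ realizing the norms, a unit vector $v$ attaining $\|\A^{|\I|}(x)\|$, and a unit covector $\xi$ attaining $\|\A^{|\J|}(y)^*\|$. Using the canonical stable and unstable holonomies $H^{s/u}$, one transports $v$ and $\xi$ into a common fiber over a point $w\in[\I\K\J]$, so that
\[
\vp_\A(\I\K\J) \gtrsim \bigl|\xi\bigl(H^s_{\cdot,\cdot}\,\A^{|\K|}(w)\,H^u_{\cdot,\cdot}\,v\bigr)\bigr|\cdot \|\A^{|\I|}(x)\|\cdot \|\A^{|\J|}(y)\|.
\]
The problem thus becomes: find $\K$ of bounded length so that the conjugated transfer does not send $v$ into the kernel of $\xi$, uniformly over the choices involved. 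Irreducibility forbids any $\A$- and bi-holonomy-invariant subbundle on which $\xi$ could vanish identically, so a compactness argument on $\R P^{d-1}\times \R P^{d-1}$ produces a finite family of connecting orbit segments that handles all configurations with a uniform lower bound $c>0$. Strong bunching ($2^{\a/3}$ for $d\geq 3$) enters precisely to ensure the holonomies satisfy the regularity estimates needed for these compactness and distortion arguments.

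For (2), the plan is to reduce quasi-multiplicativity of $\Phi_\A^s$ to the corresponding property for the norm potentials of the exterior product cocycles $\A^{\wedge t}$ for $t=\lfloor s\rfloor, \lceil s\rceil$, using the identity $\vp^s(A) = \|A^{\wedge\lfloor s\rfloor}\|^{1-\{s\}}\|A^{\wedge\lceil s\rceil}\|^{\{s\}}$ for $s\in[0,d]$. Since $\A$ is typical, each $\A^{\wedge t}$ is 1-typical, so one has a periodic point $p$ at which $P=\A^{\mathrm{per}(p)}(p)$ (and hence $P^{\wedge t}$) has simple eigenvalues of distinct moduli, together with a homoclinic point $z$ whose holonomy loop $\psi=H^s_{z,p}H^u_{p,z}$ sends the eigenbasis of $P^{\wedge t}$ into general position. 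The pinching-twisting construction of Bonatti-Viana then furnishes, for any $\I,\J$, a connecting word routed through a controlled number of iterates near $p$ and through $z$ that simultaneously aligns the top $t$ singular directions of $\A^{|\I|}$ with those of $\A^{|\J|}$ for each fixed $t$, giving quasi-multiplicativity of $\Phi_\A^t$ for integer $t$. Uniformity in $s$ for non-integer values then follows by interpolation via the $\vp^s$ formula.

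The hardest step is (2), where the genuinely new ingredient is the simultaneous control of all exterior powers via a single connecting word (or at least one whose length depends only on $d$ and on quantitative pinching/twisting constants, not on $\I,\J$). The technical subtleties are verifying that exterior products inherit sufficient fiber-bunching for their holonomies to coincide with the natural exterior products of the original holonomies, and combining the connecting words produced by the pinching-twisting argument across different $t\in\{1,\ldots,d-1\}$ without losing uniform length control. Once this reduction is carried out, Proposition \ref{prop: qm} immediately yields the unique Gibbs equilibrium state in both cases.
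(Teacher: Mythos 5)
The paper does not actually prove this proposition; it simply cites the prior literature. The chain of citations runs: Bochi--Garibaldi (\cite{bochi2019extremal}) show that a strongly bunched, irreducible cocycle is \emph{spannable}; Butler and Park (\cite{butler2019thermodynamic}) show that the norm potential of any spannable cocycle is quasi-multiplicative; and Park (\cite{park2019quasi}) proves that typicality gives quasi-multiplicativity of $\Phi_\A^s$ for all $s\geq 0$. Your attempt is therefore a blind reconstruction of those cited proofs, not an alternative to the paper's argument, since the paper has no argument of its own here.

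Within that framing, your sketch is in the right spirit but has two notable gaps. For part (1) you try to go directly from irreducibility plus a compactness argument on $\R P^{d-1}\times\R P^{d-1}$ to a uniform lower bound, but the cited chain organizes precisely this argument through the intermediate notion of \emph{spannability} (the ability to spread a single vector into a spanning set of $\R^d$ via cocycle products along connecting orbits of uniformly bounded length). That concept is what allows one to pass from the qualitative statement of irreducibility to a uniform quantitative bound; your ``finite family of connecting orbit segments'' is the conclusion of that step, not a separate compactness argument you can invoke freely. For part (2) there is a logical slip: having simple eigenvalues of distinct moduli for $P$ does \emph{not} imply the same for $P^{\wedge t}$ (eigenvalue products can coincide, e.g.\ $\lambda_1\lambda_4=\lambda_2\lambda_3$), which is exactly why the definition of typicality requires 1-typicality of $\A^{\wedge t}$ for each $t$ separately rather than deducing it from 1-typicality of $\A$. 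You do correctly identify the genuine technical obstacle --- producing a single connecting word of bounded length that simultaneously controls all exterior powers --- and the interpolation identity $\vp^s(A)=\|A^{\wedge\lfloor s\rfloor}\|^{1-\{s\}}\|A^{\wedge\lceil s\rceil}\|^{\{s\}}$ you use is correct, so the high-level strategy for (2) does match the approach in \cite{park2019quasi}.
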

For the first statement, it was shown by Bochi and Garibaldi \cite{bochi2019extremal} that $\A$ is spannable. The second author and Butler \cite{butler2019thermodynamic} then showed that the norm potentials of spannable cocycles are quasi-multiplicative. The second statement is established by the second author in \cite{park2019quasi}. We also note that 1-typicality implies that the norm potential $\Phi_\A$ is quasi-multiplicative, and typicality ensures that the same is true for the $s$-singular value potentials $\Phi_\A^s$ for all $s \geq 0$.

Notice from its definition that if $\A \colon \Sig \to \glr$ is typical, then so are its powers $\A^n$ for all $n\in\N$. Hence, Proposition \ref{prop: cocycles} has an immediate corollary:
\begin{cor}\label{cor: typical then totally erg}
If $\A$ is typical, then for any $s\geq 0$, the unique equilibrium state $\mu_{\A,s}$ for $\Phi_\A^s$ is totally ergodic. 
\end{cor}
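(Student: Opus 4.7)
The plan is to verify that $\mu_{\A,s}$ is $\s^n$-ergodic for every $n \in \N$; total ergodicity then follows by definition. The natural strategy is to realize $\mu_{\A,s}$ as the unique equilibrium state of the cocycle $\A^n$ over the system $(\Sig,\s^n)$ and invoke Proposition \ref{prop: cocycles}(2) again, now applied at the level of $\s^n$.

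Fix $n\in\N$. First I would verify that $(\Sig,\s^n)$ is itself a mixing subshift of finite type: after recoding with alphabet $\L(n)$, the transition matrix is a suitable truncation of $T^n$, which is primitive since some power of $T$ is strictly positive. Next, I would argue that $\A^n$ is typical over $(\Sig,\s^n)$. Starting from the $\s$-periodic pinching point $p$ and homoclinic point $z$ in the definition of 1-typicality, after possibly replacing $p$ by a suitable iterate, $p$ remains $\s^n$-periodic, and the matrix $(\A^n)^{\mathrm{per}_{\s^n}(p)}(p)$ is simply an integer power of $\A^{\mathrm{per}(p)}(p)$, so it retains simple eigenvalues of distinct moduli. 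The homoclinic point $z$ continues to lie in $\W^s(p)\cap \W^u(p)$ for $\s^n$, and the holonomy loop $H^s_{z,p}H^u_{p,z}$ is the same, so the twisting condition persists. The same discussion applied to each exterior power $\A^{\wedge t}$ gives typicality of $\A^n$ over $(\Sig,\s^n)$. Thus Proposition \ref{prop: cocycles}(2) produces a unique equilibrium state $\nu$ for the singular value potential $\Phi^s_{\A^n}$ with respect to $\s^n$, and as the unique element of a convex set of invariant measures it is $\s^n$-ergodic.

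The remaining task is to match $\nu$ with $\mu_{\A,s}$. Using Abramov's formula $h_{\mu}(\s^n)=n\, h_\mu(\s)$ for any $\s$-invariant $\mu$, together with the relation
$$\lim_{k\to\infty}\frac{1}{k}\int \log \vp^s_{\A,nk}\,d\mu \;=\; n\cdot \lim_{k\to\infty}\frac{1}{k}\int \log \vp^s_{\A,k}\,d\mu,$$
the subadditive variational principle yields $P_{\s^n}(\Phi^s_{\A^n})=n\,P_\s(\Phi^s_\A)$, and one can check that any $\s^n$-invariant competitor is dominated by its $\s$-symmetrization $\frac{1}{n}\sum_{i=0}^{n-1}\s^i_*\mu$. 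Consequently $\mu_{\A,s}$, regarded as a $\s^n$-invariant measure, attains the pressure $P_{\s^n}(\Phi^s_{\A^n})$ and must therefore coincide with $\nu$. In particular $\mu_{\A,s}$ is $\s^n$-ergodic, and since $n$ was arbitrary this gives total ergodicity.

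The main obstacle is purely bookkeeping in the middle paragraph: carefully confirming that the data witnessing typicality for $\A$ over $(\Sig,\s)$ genuinely transfers to data witnessing typicality for $\A^n$ over $(\Sig,\s^n)$ (periods, eigenvalue moduli, and the unchanged holonomy loop). Once this is in place, the pressure identity and uniqueness of the equilibrium state do the rest, and no genuinely new dynamical input is required.
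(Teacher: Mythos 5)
Your proposal is correct and follows the same route as the paper: the paper observes that powers $\A^n$ of a typical cocycle are again typical (over $(\Sig,\s^n)$), and then invokes Proposition \ref{prop: cocycles} to get a unique $\s^n$-equilibrium state, which is identified with $\mu_{\A,s}$. You are simply supplying the details the paper leaves implicit — the recoding of $(\Sig,\s^n)$ as a mixing SFT, the persistence of pinching and twisting for $\A^n$, the pressure identity via Abramov's formula, and the symmetrization step identifying the two equilibrium states.
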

This corollary was already observed in the previous paper of the authors \cite{call2020k}. This is the reason why we do not have to separately assume the total ergodicity on $\mu_{\A,s}$ from the second case of Theorem \ref{thm: A}, because it is already implied by typicality.

\begin{rem}\label{rem: thm A as cor}
In view of Propositions \ref{prop: CP}, and \ref{prop: cocycles}, and Corollary \ref{cor: typical then totally erg}, the unique equilibrium states under consideration in Theorem \ref{thm: A} satisfy the $K$-property. Once we establish that such measures are also absolutely continuous with respect to a non-atomic product measure on the local stable and unstable sets (which we call \textit{measurable local product structure} in the next section), Theorem \ref{thm: A} follows from Theorem \ref{thm: symbolic K to Bernoulli}. The measurable local product structure of such measures is established in Proposition \ref{prop: LPS}. The proof of Theorem \ref{thm: symbolic K to Bernoulli} follows from a more general form of Theorem \ref{thm: B}, which appears in Section \ref{sec: Bernoulli}.
\end{rem}

\section{Measurable Local Product Structure}\label{sec: LPS}

\begin{defn}\label{defn: lps}
Let $(X,f)$ be a metric Anosov system, and let $\mathcal{B}$ be the Borel $\sigma$-algebra on $X$. A Borel measure $\mu$ on $X$ has \textit{measurable local product structure} if $\mu$-almost every $x\in X$ belongs to a positive measure open set $U \subset X^s\left(x,\frac{\eps_X}{2}\right) \x X^u\left(x,\frac{\eps_X}{2}\right)$ for which:
\begin{itemize}
    \item There exist non-atomic, complete, Borel probability measures $\nu,\rho$ on the spaces $\left(X^u\left(x,\frac{\eps_X}{2}\right),\mathcal{B}\cap X^u\left(x,\frac{\eps_X}{2}\right) \right)$ and $\left(X^s\left(x,\frac{\eps_X}{2}\right),\mathcal{B}\cap X^s\left(x,\frac{\eps_X}{2}\right)\right)$, respectively
    \item $\mu|_U\ll \nu\x\rho$.
\end{itemize}
\end{defn}
Note that these assumptions on $\nu$ make $(X^u(x,\varepsilon), \nu|_{X^u(x,\varepsilon)})$ equipped with the completed Borel $\sigma$-algebra a Lebesgue space for all $\varepsilon \leq \frac{\eps_X}{2}$, and similarly for $X^s(x,\eps)$ and $\rho$. Furthermore, $\nu\x\rho$ is a Borel probability measure on $X^u(x,\frac{\eps_X}{2})\x X^s(x,\frac{\eps_X}{2})$.

Of interest, though not necessary for the applications present in this paper, when working with a fully supported ergodic measure, this definition can be checked on a single open set.
\begin{prop}
If a Borel measure $\mu$ is ergodic and fully supported, then $\mu$ has measurable local product structure if there exists $x\in U\subset X^u\left(x,\frac{\eps_X}{2}\right)\x X^s\left(x,\frac{\eps_X}{2}\right)$ such that:
\begin{itemize}
    \item $\mu(U) > 0$ and $U$ is open,
    \item There exist non-atomic, complete, Borel probability measures $\nu,\rho$ on $X^u\left(x,\frac{\eps_X}{2}\right)$ and $X^s\left(x,\frac{\eps_X}{2}\right)$, respectively,
    \item $\mu|_U\ll \nu\x\rho$.
\end{itemize}
\end{prop}

\begin{proof}
Let $x,U, \rho, \nu$ be as in the proposition statement. Now, let $y\in U$ be arbitrary. For all sufficiently small $\delta > 0$, we can define a probability measure $\rho_{y,\delta}$ on $X^s(y,\delta)$ by taking $\rho_{y,\delta}(A) = \frac{\rho([x,A])}{\rho([x,X^s(y,\delta)])}$, and we define $\nu_{y,\delta}$ on $X^u(y,\delta)$ similarly. Note that in these definitions, we are implicitly using the fact that $\mu$ is fully supported. This allows us to assume that $\rho([x,X^s(y,\delta)]) > 0$, as if not, we would have a small ball containing $y$ of zero $\mu$-measure. With this established, let $n\in\N$ be arbitrary. Taking $\eps_n$ as in Proposition \ref{prop: rectangles under pushforwards}, for all $y\in U$, we have established that for some $\delta < \epsilon_n$, $\mu\ll \rho_{y,\delta}\x \nu_{y,\delta}$. Now, observe that $f^n X^s(y,\delta) \subset X^s(f^ny,\eps_X/2)$ and $f^nX^u(y,\delta) \subset X^u(f^ny,\eps_X/2)$ (see Proposition \ref{prop: images of stables}). Furthermore, we have
$$\mu = (f^n)^*\mu \ll (f^n)^*\rho_{y,\delta} \x (f^n)^*\nu_{y,\delta}.$$
Therefore, $f^n y$ is contained in an open set $U'\subset X^s(f^ny,\epsilon)\x X^u(f^ny,\epsilon)$ on which $\mu|_{U'}$ is absolutely continuous with respect to a product measure. Now $\mu$-almost every $z\in X$ is in $f^n U$ for some $n$ by ergodicity of $\mu$, and so we can apply the previous argument taking $y = f^{-n}z$, completing our proof.
\end{proof}

The following proposition shows that subadditive equilibrium states under consideration in Theorem \ref{thm: A} have measurable local product structure.

\begin{prop}\label{prop: LPS}
Let $(\Sigma_T,\sigma)$ be a mixing shift of finite type. If $\mu$ is a non-atomic, $\sigma$-invariant, complete, Borel measure such that $\mu(\I\J) \leq C\mu(\I)\mu(\J)$ for some $C > 0$ independent of $\I,\J\in\L$, then $\mu$ has measurable local product structure.
\end{prop}

\begin{proof}
Recall that $(\Sigma_T,\sigma)$ is a shift of finite type which we have chosen to have forbidden words of length at most $2$ (i.e., to check the admissibility of a word, we need only confirm that there are no inadmissible words of length $2$). Thus, given any $x,y\in\Sigma_T$ with $x_0 = y_0$, we can define $z\in\Sigma_T$ by $z_n = x_n$ for $n\leq 0$ and $z_n = y_n$ for $n\geq 0$.

Let $x\in\Sigma_T$ be such that $\mu([x_{-1}x_0x_1]) > 0$. This encompasses $\mu$-almost every $x\in\Sigma_T$. Now, $\sigma^{-1}[x_{-1}x_0x_1]$ can be identified with the product $X^s(x,\frac{1}{2})\x X^u(x,\frac{1}{2})$ via the bracket operation discussed in \ref{subsec:metric Anosov}. We will define measures $\mu^+$ and $\mu^-$ on $X^u(x,\frac{1}{2})$ and $X^s(x,\frac{1}{2})$, respectively. Letting $\pi_u : \Sigma_T\to \Sigma_T^+$ be projection, define $\mu^+$ on $X^u(x,2^{-1})$ by
$$\mu^+(A) := \frac{\mu(\pi_u^{-1}(\pi_u(A\cap X^u(x,2^{-1}))))}{\mu([x_0x_1])}$$
for any compact $A\subset \Sigma_T$. Now, define $\mu^-$ on $X^s(x,2^{-1})$ by
$$\mu^-(A) := \frac{\mu(\sigma(\pi_s^{-1}(\pi_s(\sigma^{-1}(A\cap X^s(x,2^{-1}))))))}{\mu([x_{-1}])}$$
This suffices to show that any Borel set is measurable, and so these measures are Borel.
The following straightforward argument will establish that they are also probability measures, as well as demonstrating necessary facts relating $\mu$ to $\mu^+\x\mu^-$.

Now, given $\I\in\L$, we will show that
$$\pi_u^{-1}(\pi_u([x_0x_1\I]\cap X^u(x,2^{-1}))) = [x_0x_1\I].$$
First, suppose $y\in [x_0x_1\I]$. Then define $z\in\Sigma_T$ by $z_n = x_n$ for $n\leq 0$ and $z_n = y_n$ for $n > 0$. From this definition, we see that $z\in [x_0x_1\I]\cap X^u(x,2^{-1})$ and $\pi_u(z) = \pi_u(y)$. The reverse inclusion is immediate. Therefore, we see that
$$\mu^+([x_0x_1\I]\cap X^u(x,2^{-1})) \geq \mu([x_0x_1\I]).$$
We show that a similar result holds for $\mu^-$ by analogous arguments. Namely, let $\J\in\L$, and observe that
$$\sigma^{1+|\J|}[\J x_{-1}] = \sigma(\pi_s^{-1}(\pi_s(\sigma^{|\J|}[\J x_{-1}]\cap \sigma^{-1}X^s(x,2^{-1})))).$$
This is because given $y\in \sigma^{1+|\J|}[\J x_{-1}]$, we can define $z\in\Sigma_T$ by taking $z_n = y_{n-1}$ for $n\leq 0$ and $z_n = x_{n-1}$ for $n\geq 0$. Then it is straightforward to check that $\pi_s(\sigma^{-1}y) = \pi_s(z)$ and that $z\in \sigma^{|\J|}[\J x_{-1}]\cap \sigma^{-1}X^s(x,2^{-1})$ as desired. Taking $\J$ to be the empty word shows that $\mu^-$ is a probability measure.

Together, these results show that for any cylinder set
$$\sigma^{1+|\I|}[\I x_{-1}x_0x_1\J]\subset [x_{-1}x_0x_1],$$
we have 
$$\mu(\sigma^{1+|\I|}[\I x_{-1}x_0x_1\J]) \leq C\mu([\I x_{-1}])\mu([x_0x_1\J]) \leq C\mu^-(\sigma^{1+|\I|}[\I x_{-1}]))\mu^+([x_0x_1\J]).$$
Consequently, $\mu\ll\mu^+\x\mu^-$, as desired.

The only remaining property needed is that $\mu^+$ and $\mu^-$ are non-atomic. This is immediate from the lack of atoms in $\mu$. If $\mu^+(\{y\}) > 0$ for some $y\in \Sigma_T$, this would imply that the set $A = \{z\in\Sigma_T \mid z_n = y_n \forall n\geq 0\}$ has positive $\mu$-measure. However, $\operatorname{diam}\sigma^{m}A \to 0$ as $m\to\infty$, and so $\mu(\sigma^mA)$ must go to $0$ because $\mu$ is non-atomic. As $\mu$ is measure-preserving, it follows immediately that $\mu(A) = 0$, as desired. Non-atomicity of $\mu^-$ is proved similarly.\end{proof}

Observe that shift-invariant measures with the Gibbs (or subadditive) Gibbs property satisfy this condition. Therefore, we can apply this result to the equilibrium states considered in Theorem \ref{thm: A}.

\section{Bernoulli property}\label{sec: Bernoulli}
The goal of this section is to state and get started on the proof of the following abstract theorem on Bernoulli property. Notice that it is a reformulation of Theorem \ref{thm: B} using measurable local product structure. As pointed out in Remark \ref{rem: thm A as cor}, Theorem \ref{thm: A} then also follows from it. 

\begin{thm}\label{thm: abstract}
Suppose $(X,f)$ is a Smale space. If $\mu$ is an invariant, complete, Borel measure which has measurable local product structure, then if $\mu$ has the $K$-property, it is Bernoulli.
\end{thm}

In this section, we will collect relevant preliminary results to prove this theorem, and the and the actual proof will appear over the next two sections.

\begin{note}
In this and the following sections, we will often have to compare two measures $\mu,\nu$ on the same space. When we do so, we will choose one representative of the Radon-Nikodym derivative and treat it as a well-defined function. Furthermore, we will often refer to bounds on the derivative. When we do, we will abuse notation and speak of bounds apart from sets of measure zero. Thus, for instance, if we say $\abs{\frac{d\mu}{d\nu}|_A - 1} < \ep$, this serves only as shorthand for the fact that for any measurable $B\subset A$ with $\nu(B) > 0$, $\abs{\frac{\mu(B)}{\nu(B)} - 1} < \ep$.
\end{note}

\subsection{Definitions}

For further details about the following definitions, we refer the reader to the book by Ponce and Varao \cite{ponce2019introduction}, as well as the paper by Chernov and Haskell \cite{chernov1996nonuniformly}.

Consider two measure spaces $(X,\mu)$ and $(Y,\nu)$. A probability measure $\lambda$ on $X \times Y$ is a \textit{joining} if $(\pi_1)_*\lambda = \mu$ and $(\pi_2)_*\lambda = \nu$ where $\pi_1,\pi_2$ are the canonical projections onto $X$ and $Y$, respectively. We denote by $J(\mu,\nu)$ the set of all joinings of $\mu$ and $\nu$. 

We now introduce the notion of $\ol{d}$-distance between partitions and sequences of partitions. Consider finite partitions $\xi=\{A_1,\ldots,A_k\}$ of $X$ and $\eta= \{B_1,\ldots,B_k\}$ of $Y$. For any $x\in X$, we define $\xi(x)$ as the unique index $j \in \{1,\ldots,k\}$ such that $x \in A_j$. Similarly, we define $\eta(y)$ for $y\in Y$. Then their $\ol{d}$-distance can be defined as follows:
$$\ol{d}(\xi,\eta):=\inf\limits_{\lambda \in J(\mu,\nu)} \lambda\{(x,y) \colon \xi(x) \neq \eta(y)\}.$$ 

This definition of the $\ol{d}$-distance can be extended to measure distance between two sequences of partitions $\{\xi_i\}_1^n$ on $X$ and $\{\eta_i\}_1^n$ on $Y$. First, for any $x\in X$ and $y\in Y$ we set
$$h(x,y): = \frac{1}{n}\sum_{i=1}^{n}\delta\{\xi_i(x) \neq \eta_i(y)\}$$
where $\delta$ is the indicator function. Using this function $h$, we define 
$$
\ol{d}(\{\xi_i\}_1^n,\{\eta_i\}_1^n) := \inf\limits_{\lambda \in J(\mu,\nu)} \int h(x,y) \, d\lambda(x,y).
$$

In many cases, the space $Y$ will be a $\mu$-measurable subset $E\subset X$ equipped with the conditional measure $\mu|_E$ defined by
$$\mu|_E(F) := \mu(F)/\mu(E)$$
for any measurable $F \subseteq E$. Moreover, the partition on $E \subset X$ will often be of the form $\xi \mid E$ where $\xi$ is a partition on $X$. The partition $\xi\mid E$ consists of elements $A \cap E$, $A \in \xi$. As we will often be dealing with maps from one measure space $f : (X,\mathcal{A},\mu)\to (Y,\mathcal{B},\nu)$, we note that when we say $f$ is \emph{measure-preserving}, we mean that for any $B\in\mathcal{B}$, we have $\mu(f^{-1}(B)) = \nu(B)$. Using these definitions, we introduce the notion of Very Weak Bernoulli partitions.

\begin{defn}
Let $(X,\mathcal{B},\mu)$ be a Lebesgue space, and let $f : X\to X$ be an invertible and  $\mu$-measure-preserving. Further, assume $X$ is a compact metric space and $\mathcal{B}$ is the completed Borel $\sigma$-algebra with respect to $\mu$, and let $\xi$ be a finite measurable partition. We say that $\xi$ is \textit{Very Weak Bernoulli} (VWB) if for any $\ep > 0$, there exists $N_0\in \N$ such that for all $m_2\geq m_1 \geq N_0$, for all $n > 0$, and for $\ep$-almost every $E\in\bigvee_{m_1}^{m_2}f^k\xi$, we have
$$\bar{d}( \{f^{-i}\xi\mid E\}_1^n, \{f^{-i}\xi\}_1^n) \leq \ep$$
with respect to $(E,\mu|_E)$ and $(X,\mu)$, where $f^k\xi = \{f^kA\mid A\in\xi\}$ for all $k\in\mathbb{Z}$.
\end{defn}

As the name suggests, a Bernoulli partition $\xi$ for $(X,f,\mu)$ is necessarily a Very Weak Bernoulli partition; see \cite[Theorem 3.1]{ponce2019introduction}. What makes Very Weak Bernoulli partitions  important is the converse implication developed by Ornstein outlined in the next two propositions: VWB partitions provide a method for establishing the Bernoulli property, which is often difficult via a direct approach. In fact, many results in the literature establishing the Bernoulli property follow this general outline \cite{pesin1977characteristic,ornstein1998bernoulli,chernov1996nonuniformly}.
\begin{prop} 
If a partition $\xi$ of $X$ is VWB, then writing $\mathcal{B}$ for the $\sigma$-algebra generated by $\bigvee_{-\infty}^{\infty}f^n\xi$, we have that $\left(X,f,\mathcal{B},\mu\right)$ is Bernoulli.
\end{prop}

\begin{prop} Suppose there exists an increasing sequence of finite measurable partitions $\{\xi_i\}_{i\in\N}$ such that $\displaystyle\bigvee_{i=1}^\infty\bigvee_{n=-\infty}^\infty f^{n}\xi_i$ generates the entire $\s$-algebra $\mathcal{B}$ and that $(X,f,\bigvee_{n=-\infty}^\infty f^{n}\xi_i,\mu)$ is Bernoulli for each $i\in \N$, then $(X,f,\mathcal{B},\mu)$ is Bernoulli.
\end{prop}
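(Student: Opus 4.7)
The plan is to use Ornstein's Very Weak Bernoulli (VWB) criterion: I will show that every finite measurable partition $\eta$ of $X$ is VWB, and then apply the first proposition to a generating partition to conclude $(X,\mu,f)$ is Bernoulli. Existence of a generating partition, at worst a refining sequence $\{\eta_k\}$ of finite partitions with $\bigvee_k\bigvee_n f^n\eta_k = \mathcal{F}$, is automatic since $X$ is a standard measurable space.

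Fix a finite partition $\eta$ of $X$ and $\epsilon>0$. Write $\mathcal{F}_i := \bigvee_{n\in\Z} f^n\xi_i$, so that $\mathcal{F}_i \nearrow \mathcal{F}$, the full $\mu$-completed $\sigma$-algebra, by hypothesis. Applying the increasing martingale theorem to each indicator $\mathbf{1}_A$ with $A\in\eta$, for all sufficiently large $i$ we can find an $\mathcal{F}_i$-measurable partition $\eta'$ in measure-preserving labeled bijection with $\eta$ such that $\sum_{A\in\eta}\mu(A\triangle A')<\delta$, where $\delta$ is chosen arbitrarily small. Since $(X,\mu,f,\mathcal{F}_i)$ is Bernoulli and $\eta'$ is a finite partition of this factor, Ornstein's theorem guarantees that $\eta'$ is VWB inside the factor; but VWB depends only on the joint distribution of $(\eta'(f^n x))_n$ under $\mu$, which is insensitive to whether we view $\eta'$ inside the factor or inside $(X,\mu,f)$. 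Hence $\eta'$ is VWB in the full system as well.

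The key step, and the principal technical obstacle, is the perturbation lemma that upgrades VWB of $\eta'$ to VWB of $\eta$. For an atom $E\in\bigvee_{m_1}^{m_2}f^k\eta$ paired with its $\eta'$-counterpart $E'\in\bigvee_{m_1}^{m_2}f^k\eta'$, one has $\mu(E\triangle E')\le (m_2-m_1+1)\delta$, which is small on $\epsilon$-typical $E$. A joining of $(E',\mu|_{E'})$ and $(X,\mu)$ realizing $\bar d(\{f^{-i}\eta'|_{E'}\}_1^n,\{f^{-i}\eta'\}_1^n)\le \epsilon/2$ can then be modified on a set of mass of order $n\delta$ to produce a joining of $(E,\mu|_E)$ and $(X,\mu)$ witnessing $\bar d(\{f^{-i}\eta|_{E}\}_1^n,\{f^{-i}\eta\}_1^n)\le\epsilon$. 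The correct quantifier order is: fix $\epsilon$ and the window length $n$ first, then choose $\delta\ll\epsilon/n$, and finally take $i$ large. The delicate bookkeeping involves the conditional normalization $1/\mu(E)$, which is uniformly controlled on typical $E$, and the redistribution of mass between the paired atoms; the argument is standard and written out in detail in \cite{ornstein1998bernoulli,ponce2019introduction}.

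Once every finite partition of $X$ is VWB, the first proposition applied to a finite generator — or, using the countable generator that always exists on a standard measurable space together with a straightforward refining-sequence argument, to each $\eta_k$ in a refining family generating $\mathcal{F}$ — yields that $(X,\mu,f) = (X,\mu,f,\bigvee_n f^n\eta)$ is Bernoulli. The main novelty one has to justify is that the hypothesis on $\{\xi_i\}$ is used only through the martingale approximation step; no feature of the specific factors matters beyond their being Bernoulli.
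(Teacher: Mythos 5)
The paper offers no proof of this proposition; it is cited as a classical fact from Ornstein theory. Your proposal attempts to supply one, and its overall architecture — approximate an arbitrary finite partition $\eta$ by an $\mathcal{F}_i$-measurable partition $\eta'$, use that factors of Bernoulli systems are Bernoulli so every finite partition of the factor is VWB, then perturb VWB from $\eta'$ to $\eta$ — is the right skeleton. The first two steps are sound, including the observation that VWB depends only on the law of the stationary process $(\eta'(f^n x))_{n}$, so it transfers from the factor to the ambient system. The gap is in the perturbation step, and it is genuine.

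The quantifier order you propose (``fix $\epsilon$ and the window length $n$ first, then choose $\delta \ll \epsilon/n$, and finally take $i$ large'') does not establish VWB. In the VWB definition, the threshold $N_0$ depends only on $\epsilon$, and the estimate must then hold for \emph{all} $n > 0$ and \emph{all} $m_2 \geq m_1 \geq N_0$. Your $\delta$ (hence the choice of $\eta'$) would have to depend on $n$, and you cannot select a single approximating partition that works for all window lengths simultaneously. Worse, the same issue affects the conditioning window: your bound $\mu(E \triangle E') \leq (m_2 - m_1 + 1)\delta$ is correct, but $m_2 - m_1$ is unbounded in the VWB definition, so this quantity does not become small for any fixed $\delta$. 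A Markov argument over atoms then fails to produce $\epsilon$-typicality of those $E$ where $\mu(E \triangle E')/\mu(E)$ is small, because the total defect $(m_2 - m_1 + 1)\delta$ grows with the window. Conditioning on the $\eta$-past is simply not close to conditioning on the $\eta'$-past in the required uniform sense.

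This is precisely why the classical proof does not run the perturbation through VWB directly. Ornstein's route is via the finitely determined (FD) characterization, which is equivalent to VWB but is stated in terms of weak-$*$ and $\bar{d}$ closeness of the process distribution rather than conditional measures on distant-past atoms. FD is stable under $\bar{d}$-limits of processes, and $L^1$-approximation of partitions ($\sum_A \mu(A \triangle A') < \delta$) yields $\bar{d}$-closeness of the associated processes with constant $O(\delta)$, \emph{uniformly in} $n$. So the right order of argument is: $\eta_i \to \eta$ in $L^1$, each $\eta_i$ is FD (being a partition of a Bernoulli factor), FD is $\bar{d}$-closed, hence $\eta$ is FD, hence VWB. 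Your sketch would need to be rewritten in these terms; the direct VWB-to-VWB perturbation with the quantifiers as you stated them does not close.
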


We now describe a mechanism for verifying that a partition is VWB.
We will show this by means of the following lemma, which involves constructing maps which, informally, can be thought of as
\begin{enumerate}
	\item Keeping \emph{most} points in the same set of partition elements
	\item Being \emph{mostly} measure-preserving
\end{enumerate}

To formalize these two concepts we need to introduce one more definition.

\begin{defn}\label{defn: almost mp}
Suppose $(X_i,\mathcal{A}_i,\mu_i)$ are probability spaces for $i = 1,2$. An $(\mathcal{A}_1,\mathcal{A}_2)$-measurable map $\theta : X_1\to X_2$ is \emph{$\eps$-measure-preserving} if there exists $G\in \mathcal{A}_1$ with $\mu_1(G) > 1 - \eps$ such that for any measurable $A\subset G$ with $\mu_1(A) > 0$, we have
$$\abs{\frac{\mu_2(\theta A)}{\mu_1(A)} - 1} < \eps.$$

\end{defn}

\begin{note}
Implicit in this definition is an assumption that for all $A\subset G$ of positive measure, $\theta A\in\mathcal{A}_2$. This assumption is immediately satisfied if $\theta$ is bimeasurable. More generally, it is satisfied if $\theta|_G : (G,\mathcal{A}_1\cap G)\to (\theta(G),\mathcal{A}_2\cap \theta(G))$ is bimeasurable. We will use both of these conditions in this paper.
\end{note}

Using this, and recalling Definition \ref{defn: eps almost everywhere} of ``$\ep$-almost everywhere'', we state the following lemma from \cite{chernov1996nonuniformly}.

\begin{lem}\label{lem: dbar closeness}
Let $\{\xi_i\}_{1}^n$ and $\{\eta_i\}_{1}^{n}$ be finite sequences of partitions of nonatomic Lebesgue probability spaces $(X,\mathcal{A},\mu)$ and $(Y,\mathcal{B},\nu)$ respectively. Let $\theta : X\to Y$ be $\ep$-measure-preserving, and suppose that there exists $G\subset X$ with $\mu(G) > 1-\ep$ so that for all $x\in G$,
$$h(x,\theta(x)) < \ep.$$
Then $\bar{d}(\{\xi_i\}_1^n,\{\eta_i\}_1^n) < C\ep$ for a constant $C$ independent of both the partitions and $\ep$.
\end{lem}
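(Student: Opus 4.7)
My approach is to construct an explicit joining $\lambda\in J(\mu,\nu)$ from the map $\theta$ and then bound $\int h(x,y)\,d\lambda$ directly; since $\bar d(\{\xi_i\}_1^n,\{\eta_i\}_1^n)=\inf_{\lambda\in J(\mu,\nu)}\int h\,d\lambda$, any such upper bound proves the lemma.

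First I would consolidate the two exceptional sets. Let $G'\subset X$ with $\mu(G')>1-\ep$ be the set on which the $\ep$-measure-preserving property from Definition~\ref{defn: almost mp} holds, and let $G\subset X$ with $\mu(G)>1-\ep$ be the set on which $h(x,\theta(x))<\ep$; set $G^*:=G\cap G'$, so $\mu(G^*)>1-2\ep$. For any measurable $B\subseteq Y$, applying the $\ep$-measure-preserving inequality to $A:=G^*\cap\theta^{-1}(B)\subseteq G'$ together with $\theta(A)\subseteq B$ yields
\[
\theta_*(\mu|_{G^*})(B)=\mu(A)\leq \frac{\nu(\theta A)}{1-\ep}\leq \frac{\nu(B)}{1-\ep},
\]
so $(1-\ep)\theta_*(\mu|_{G^*})\leq\nu$ as measures on $Y$.

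I then define the joining as $\lambda=\tilde\lambda_1+\lambda_2$. The main piece $\tilde\lambda_1:=(1-\ep)(\id\times\theta)_*(\mu|_{G^*})$ is a rescaled pushforward of $\mu|_{G^*}$ through the graph of $\theta$; its marginals $(1-\ep)\mu|_{G^*}$ and $(1-\ep)\theta_*(\mu|_{G^*})$ are dominated by $\mu$ and $\nu$ respectively, by the previous paragraph. The correction $\lambda_2$ is taken to be any coupling of the residual measures $\mu-(1-\ep)\mu|_{G^*}$ and $\nu-(1-\ep)\theta_*(\mu|_{G^*})$, both of which are nonnegative and have equal total mass $1-(1-\ep)\mu(G^*)<3\ep$. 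By construction $\lambda\in J(\mu,\nu)$. The integral of $h$ then splits cleanly: on $\tilde\lambda_1$ the integrand equals $h(x,\theta(x))<\ep$ for every $x\in G^*\subseteq G$, so $\int h\,d\tilde\lambda_1<\ep$; on $\lambda_2$ we use only $0\leq h\leq 1$ to get $\int h\,d\lambda_2\leq\|\lambda_2\|<3\ep$. Adding yields $\bar d(\{\xi_i\}_1^n,\{\eta_i\}_1^n)<4\ep$, so $C=4$ suffices.

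The only genuine obstacle is the bookkeeping needed to ensure the residual marginals are nonnegative measures of matching total mass; this is precisely what forces the rescaling by $(1-\ep)$ in the definition of $\tilde\lambda_1$. Without this rescaling $\theta_*(\mu|_{G^*})$ could exceed $\nu$ by a factor close to $1/(1-\ep)$ on certain sets, and $\lambda_2$ would fail to exist as a positive measure. Everything else is a straightforward accounting against the masses of the exceptional sets $X\setminus G$ and $X\setminus G'$.
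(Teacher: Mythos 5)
Your argument is correct and complete. The paper does not actually prove this lemma; it simply cites it from Chernov--Haskell, so there is no internal proof to compare against. Your construction of an explicit joining is the standard way to establish such a $\bar d$-bound, and the bookkeeping is handled carefully: you correctly observe that the lower bound from the $\ep$-measure-preserving condition, $\nu(\theta A) > (1-\ep)\mu(A)$ for $A\subseteq G'$, applied to $A = G^*\cap\theta^{-1}(B)$ together with $\theta(A)\subseteq B$, yields $(1-\ep)\theta_*(\mu|_{G^*})\leq\nu$ (here $\mu|_{G^*}$ is the unnormalized restriction, a slight abuse of the paper's notation but unambiguous in context). This makes both residual marginals nonnegative with matching total mass $1-(1-\ep)\mu(G^*) < 3\ep$, so $\lambda_2$ exists and $\lambda=\tilde\lambda_1+\lambda_2$ is a genuine joining. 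The split $\int h\,d\tilde\lambda_1 < \ep$ (using $h(x,\theta(x))<\ep$ on $G^*\subseteq G$) and $\int h\,d\lambda_2 \leq \|\lambda_2\| < 3\ep$ (using $0\leq h\leq 1$) gives $\bar d < 4\ep$, with $C=4$ independent of $n$, the partitions, and $\ep$. The rescaling by $(1-\ep)$ is indeed the key device that makes the residual on the $Y$-side nonnegative; without it the argument would break exactly as you note.
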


Applying this to show that a partition is Very Weak Bernoulli, we have the following corollary which we will use to establish Theorem \ref{thm: abstract}.
\begin{cor}\label{cor: VWB}
Let $(X,f,\mathcal{B},\mu)$ be a compact metric space equipped with the completion of the Borel $\sigma$-algebra with respect to a probability measure $\mu$, and let $\xi$ be a finite measurable partition. Let $f : X\to X$ an invertible measure-preserving transformation. Suppose that for all $\ep > 0$, there exists $N\in \N$ so that for any $m_2\geq m_1\geq N$, and for $\ep$-almost every $E\in\bigvee_{m_1}^{m_2} f^i\xi$, there exists a map $\theta : (E,\mathcal{B}\cap E,\mu|_E)\to (X,\mathcal{B},\mu)$ satisfying:
\begin{itemize}
	\item $\theta$ is $\ep$-measure-preserving;
	\item There exists $D\subset E$ with $\mu|_E(D) > 1-\ep$ so that for all $n > 0$ and $x\in D$, with respect to the partition sequences  $\{f^{-i}\xi \mid E\}_1^{n}$ and $\{f^{-i}\xi\}_1^n$, we have $$h(x,\theta(x)) < \ep.$$
\end{itemize}
Then $\xi$ is Very Weak Bernoulli. 
\end{cor}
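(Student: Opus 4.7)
The plan is to derive the corollary as a direct bookkeeping consequence of Lemma \ref{lem: dbar closeness}, essentially just matching the quantifiers in the definition of Very Weak Bernoulli with those provided by the hypothesis. The only subtlety is that Lemma \ref{lem: dbar closeness} outputs an upper bound $C\varepsilon$ on the $\bar{d}$-distance, while the VWB definition demands an $\varepsilon$-bound, so we need to apply the hypothesis with a suitably rescaled tolerance.

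More precisely, fix $\varepsilon>0$ and set $\varepsilon' := \varepsilon/\max(C,1)$, where $C$ is the universal constant from Lemma \ref{lem: dbar closeness}. Apply the hypothesis of the corollary with tolerance $\varepsilon'$ to obtain an integer $N\in\mathbb{N}$ such that for every $m_2\geq m_1\geq N$, on a collection of atoms $E\in\bigvee_{m_1}^{m_2}f^i\xi$ of total $\mu$-measure at least $1-\varepsilon'$, we can produce a map $\theta:(E,\mu|_E)\to(X,\mu)$ that is $\varepsilon'$-measure-preserving and a set $D\subset E$ with $\mu|_E(D)>1-\varepsilon'$ on which $h(x,\theta(x))<\varepsilon'$ with respect to $\{f^{-i}\xi\mid E\}_1^n$ and $\{f^{-i}\xi\}_1^n$ for every $n>0$.

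For each such good atom $E$ and each $n>0$, the map $\theta$ satisfies precisely the hypotheses of Lemma \ref{lem: dbar closeness} applied to the finite sequences of partitions $\{f^{-i}\xi\mid E\}_1^n$ and $\{f^{-i}\xi\}_1^n$, with the set $G=D$ playing the role of the good set. The lemma therefore yields
$$\bar{d}\bigl(\{f^{-i}\xi\mid E\}_1^n,\{f^{-i}\xi\}_1^n\bigr)<C\varepsilon'\leq\varepsilon.$$
Since this holds for every $n>0$ and for a collection of atoms $E$ of total $\mu$-measure at least $1-\varepsilon'\geq 1-\varepsilon$, the criterion in the definition of Very Weak Bernoulli is verified with the integer $N_0:=N$. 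Thus $\xi$ is VWB.

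There is essentially no obstacle here beyond the rescaling $\varepsilon\mapsto\varepsilon/C$; all of the technical content has been absorbed into Lemma \ref{lem: dbar closeness}, and the hypothesis of the corollary is tailored exactly to supply that lemma's input simultaneously for every $n>0$ via a single map $\theta$. The substantive work, which will occupy the remainder of Section \ref{sec: Bernoulli}, is instead the construction of the map $\theta$ in the setting of Theorem \ref{thm: abstract}: this is where the measurable local product structure and the $K$-property will enter, the latter through Proposition \ref{prop: K-partitions} to control how the conditional measures on atoms $E$ approximate $\mu$, and the former to transport points along local stable or unstable sets so as to keep $h(x,\theta(x))$ small on most of $E$.
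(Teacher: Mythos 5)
Your proposal is correct and takes essentially the same approach the paper intends: the paper gives Corollary \ref{cor: VWB} without a separate proof, treating it as an immediate consequence of Lemma \ref{lem: dbar closeness}, and your argument spells out exactly that deduction, with the rescaling $\varepsilon\mapsto\varepsilon/\max(C,1)$ being the only (routine) detail to supply.
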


\subsection{Useful Facts from Measure Theory}\label{subsec: measure theory}
We will make repeated use of the following results. Throughout, we will assume that $\ep < \frac{1}{5}$. As we will eventually need our results to hold only for $\ep$ arbitrarily close to $0$, we can freely make this assumption.

\begin{lem}\label{lem: large intersections}
Suppose $\mu$ is a probability measure on $X$ and $\mu(B) > 1-\ep$ and $A_1,\cdots, A_n$ are disjoint measurable sets. Then the union of all $A_i$ satisfying $\mu(A_i\cap B) < (1-\delta) \mu(A_i)$ has measure at most $\ep/\delta$.
\end{lem}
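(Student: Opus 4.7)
The plan is to give a short direct computation: isolate the ``bad'' atoms, use the hypothesis to turn the condition $\mu(A_i\cap B)<(1-\delta)\mu(A_i)$ into a lower bound on $\mu(A_i\setminus B)$, and then appeal to disjointness together with the small measure of $B^c$.

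More concretely, let $I=\{i:\mu(A_i\cap B)<(1-\delta)\mu(A_i)\}$ and set $S=\bigcup_{i\in I}A_i$. For each $i\in I$ the defining inequality rearranges to
$$\mu(A_i\setminus B)=\mu(A_i)-\mu(A_i\cap B)>\delta\,\mu(A_i).$$
Since the $A_i$ are pairwise disjoint, so are the sets $A_i\setminus B$, and summing over $i\in I$ gives
$$\mu(S\setminus B)=\sum_{i\in I}\mu(A_i\setminus B)>\delta\sum_{i\in I}\mu(A_i)=\delta\,\mu(S).$$

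On the other hand $\mu(S\setminus B)\leq\mu(B^c)<\ep$ by the hypothesis $\mu(B)>1-\ep$. Combining the two inequalities yields $\delta\,\mu(S)<\ep$, i.e.\ $\mu(S)<\ep/\delta$, which is the claim. There is no real obstacle here; the only thing to watch for is to use disjointness of the $A_i$ (so that the contributions $\mu(A_i\setminus B)$ add without overlap) and to keep the direction of the strict inequalities consistent. No invariance or dynamical input is needed.
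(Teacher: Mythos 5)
Your proof is correct and is essentially the paper's argument, just rearranged: the paper bounds $\mu(\bigcup A_i)$ by splitting each $A_i$ into $A_i\cap B$ and $A_i\cap B^c$ and isolating $\delta\mu(\bigcup A_i)\leq\mu(B^c)$, whereas you directly lower-bound $\mu(A_i\setminus B)$ and sum. Same ingredients (disjointness plus $\mu(B^c)<\ep$), same conclusion.
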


\begin{proof}
Let $\mathscr{A} = \{A_i\mid \mu(A_i\cap B) < (1-\delta)\mu(A_i)\}$. Now observe
\begin{align*}
	\mu\left(\bigcup_{A_i\in\mathscr{A}} A_i\right) &= \sum_{A_i\in\mathscr{A}} \mu(A_i\cap B) + \mu(A_i\cap B^c)
	\\
	&\leq (1-\delta)\sum_{A_i\in\mathscr{A}} \mu(A_i) + \mu(B^c)
	\\
	&\leq (1-\delta)\mu\left(\bigcup_{A_i\in\mathscr{A}} A_i\right) + \ep.
\end{align*}
Thus, $\displaystyle \mu\left(\bigcup_{A_i\in\mathscr{A}} A_i\right) < \frac{\ep}{\delta}$.
\end{proof}

We will often work with finite measures which we must later rescale to probability measures. This result simplifies our calculations in these cases.
\begin{prop}\label{prop: rescaling derivative}
Let $\ep > 0$ and $r\geq 2\ep + \ep^2$. Now suppose $(X,\mathcal{A},\mu)$ and $(X,\mathcal{B},\nu)$ are measure spaces and $A\subset X$ is $\mu$ and $\nu$ measurable with $\mu|_A \ll\nu|_A$. If $\abs{\frac{d\mu}{d\nu}\Big|_A - r} < \ep^2$, then $\abs{\frac{d\mu|_A}{d\nu|_A} - 1} < \ep$.
\end{prop}

\begin{proof}
Let $B\subset A$ be an arbitrary $\mu$ and $\nu$-measurable set. Then
$$\mu|_A(B) = \frac{\mu(B)}{\mu(A)} \leq \frac{\nu(B)(r+\ep^2)}{\nu(A)(r-\ep^2)} = \nu|_A(B)\frac{r+\ep^2}{r-\ep^2}.$$
Now, observe that 
$$\frac{r+\ep^2}{r-\ep^2} = 1 + \frac{2\ep^2}{r-\ep^2} \leq 1 + \ep$$
so long as $r\geq 2\ep + \ep^2.$ Similarly, we see that
$$\mu|_A(B) \geq \frac{r-\ep^2}{r+\ep^2}\nu|_A(B) = (1 - \frac{2\ep^2}{r+\ep^2})\nu|_A(B).$$
As
$\displaystyle \frac{2\ep^2}{r+\ep^2}\leq \ep,$
we have the reverse inequality.
\end{proof}

The following proposition shows that if we have a measure-preserving function on most of the space, then we can change the measures slightly and the function will still be $\ep$-measure-preserving.

\begin{prop}\label{prop: perturbation of measure preserving}
Let $\ep>0$ be arbitrary. Suppose $(X_1,\mathcal{A}_1,\nu_1),(X_2,\mathcal{A}_2,\nu_2)$ are probability spaces, and suppose $\mu_1,\mu_2$ are also probability measures on $(X_1,\mathcal{A}_1)$ and $(X_2,\mathcal{A}_2)$ respectively. Consider $Y_1\in\mathcal{A}_1$ and $Y_2\in \mathcal{A}_2$ satisfying the following conditions:
	\begin{enumerate}
		\item On $Y_1$, $\abs{\frac{d\mu_1}{d\nu_1} - 1} < \ep$;
		\item On $Y_2$, $\abs{\frac{d\mu_2}{d\nu_2} - 1} < \ep$;
		\item $\mu_1(Y_1)\geq 1-\ep$ and $\mu_2(Y_2) \geq 1-\ep$.
	\end{enumerate} 
	Then, if $\theta : X_1\to X_2$ is injective (up to sets of $\nu_1$-measure $0$), bimeasurable, and measure-preserving as a map from $(X_1,\mathcal{A}_1,\nu_1)$ to $(X_2,\mathcal{A}_2,\nu_2)$, then 
	it is $5\ep$-measure-preserving as a map from $(X_1,\mathcal{A}_1,\mu_1)$ to $(X_2,\mathcal{A}_2,\mu_2)$. 
\end{prop}

\begin{proof}
Let $A\subset Y_1\cap \theta^{-1}Y_2$ be in $\mathcal{A}_1$. Then observe
\begin{align*}
\mu_2(\theta A) &= \int_{\theta A}\frac{d\mu_2}{d\nu_2}\,d\nu_2
\\
&\leq (1+\ep)\nu_2(\theta A)
\\
&= (1+\ep)\nu_1(A)
\\
&= (1+\ep)\int_A \frac{d\nu_1}{d\mu_1}\,d\mu_1
\\
&\leq \frac{1+\ep}{1-\ep}\mu_1(A)
\\
&\leq (1 + 5\ep)\mu_1(A).
\end{align*}
The reverse inequality follows similarly.

Now, we will now show that $\mu_1(Y_1\cap \theta^{-1}(Y_2)) \geq 1 - 5\ep$. First, observe that 
$$\nu_1(Y_1) \geq \frac{1}{1+\ep}\mu_1(Y_1) \geq \frac{1-\ep}{1+\ep} \geq 1 - 2\ep$$
with the same proof showing $\nu_2(Y_2) \geq 1 - 2\ep$.
Observe that
\begin{align*}
\mu_1(Y_1\cap \theta^{-1}(Y_2)) &\geq (1-\ep)\nu_1(Y_1\cap \theta^{-1}(Y_2))
\\
&\geq (1-\ep)(\nu_1(\theta^{-1}(Y_2)) -2\ep)
\\
&= (1-\ep)(\nu_2(Y_2) - 2\ep)
\\
&\geq (1-\ep)(1 - 4\ep)
\\
&\geq 1 - 5\ep
\end{align*}
where the second inequality follows because $\nu_1(Y_1^c) \leq 2\ep$. 
\end{proof}

\subsection{Creating Comparison Sets}

The following proposition essentially says that if we have two equivalent measures, we can divide most of our space up into ``nice'' subsets, on which the ratio of our two measures does not change much. Eventually, we will apply this result using dynamical rectangles to formalize the argument that we need only measurable local product structure to apply the classical argument that $K$ implies Bernoulli.

\begin{prop}\label{prop: creation of general covering}
	Suppose that $\mu$ and $\lambda$ are Borel probability measures on a compact metric space $X$ with $\mu\ll\lambda$, and $\mathscr{C}$ is a generating semi-ring for the Borel $\sigma$-algebra $\mathcal{B}$. Then for all $\ep_0 > 0$, there exists a pairwise disjoint collection $\{R_i\}\subset\mathscr{C}$ equipped with the measure $\lambda|_{R_i}$ satisfying the following:
	\begin{enumerate}
		\item There exists $G_i\subset R_i$ such that $\mu|_{R_i}(G_i) > 1 - \ep_0$ and $\lambda|_{R_i}(G_i) > 1 -\ep_0$;
		\item $\mu(\bigcup_i G_i) \geq 1-\ep_0$.
		\item For each $R_i$, $\abs{\frac{d\mu|_{R_i}}{d\lambda|_{R_i}}|_{G_i} - 1} < \ep_0$;
	\end{enumerate}
\end{prop}

\begin{proof}
Let $\ep=\ep_0/4$ which will be the scale at which we will carry out the construction below. Now, because $\mu \ll\lambda$, we can choose a representative measurable function of $\frac{d\mu}{d\lambda}$ which is non-zero and finite on a set $Y_0$ with $\mu(Y_0) = 1$. By regularity of $\mu$ and $\lambda$, take $Y_1\subset Y_0$ to be compact, such that $\mu(Y_1) > 1-\ep^4$ and $\lambda(Y_1) > (1-\ep^4)\lambda(Y_0)$. Then, by applying Lusin's theorem, there exists a compact set $K\subset Y_1$ on which $\frac{d\mu}{d\lambda}\Big|_K$ is continuous, and $$\mu(K) > 1 - \ep^3 \text{ and }\lambda(K) > (1-\ep^3).$$
By continuity of the derivative, compactness of $K$, and the fact that the derivative is non-zero on $K$, we have definite upper and lower bounds
$$0 < m \leq \frac{d\mu}{d\lambda}\Big|_K \leq M < \infty.$$
Furthermore, we take $\gamma < \ep$ small enough so that $m\geq 2\gamma + \gamma^2$ in order to apply Proposition \ref{prop: rescaling derivative}. Then, by continuity of the derivative on $K$, we can choose $\delta > 0$ such that on any set $R$ with diameter at most $\delta$, there exists $r \in [m,M]$ such that $\abs{\frac{d\mu}{d\lambda}\Big|_{K\cap R} - r} \leq \gamma^2$. In particular, Proposition \ref{prop: rescaling derivative} and the choice of $\gamma$ gives $$\abs{\frac{d\mu|_{K\cap R}}{d\lambda|_{K\cap R}} - 1} < \ep$$ for such $R$.

We now prove the first and second conditions of the proposition in the following lemma.
\begin{lem}
Let $\mu,\lambda$ and $\mathscr{C}$ be as in Proposition \ref{prop: creation of general covering}, and $\delta$ and $K$ be as chosen above. Then there exists a finite collection $\{R_i\}_{i=1}^n \subset \mathscr{C}$ with $\text{diam}(R_i) < \delta$ and subsets $G_i \subseteq R_i$ such that
$$\mu|_{R_i}(G_i) \geq 1 -\ep \text{ and } \lambda|_{R_i}(G_i) \geq 1-\ep$$
and $\mu\left(\bigcup_{i=1}^n G_i\right) \geq 1-2\ep$.
\end{lem}
	\begin{proof}

	Using the fact that $\mu\ll\lambda$, fix $0 < \beta < \ep$ so that for any measurable $A\subset X$ with $\lambda(A) < \beta^3$ satisfies $\mu(A) < \ep^3$. Since $\mathscr{C}$ is a generating semi-ring, there exists a finite collection of pairwise disjoint sets $\mathscr{R}\subset\mathscr{C}$ of diameter at most $\delta$ for which we can approximate $K$ arbitrarily well with respect to $\lambda$; that is, we choose $\mathscr{R}$ so that
\begin{equation}\label{eq: scribed R}
\lambda\left(K\vartriangle \bigcup_{R\in\mathscr{R}}R\right) < \beta^4. 
\end{equation}

	This implies that the union of $R\in \mathscr{R}$ with $\lambda(R\cap K) < (1-\ep)\lambda(R)$ has very small measure with respect to $\lambda$. In particular, letting $\mathscr{B}_{\lambda}$ denote the collection of such $R$, the analogous argument of Lemma \ref{lem: large intersections} applied using \eqref{eq: scribed R} gives

	\begin{align*}
		\lambda\left(\bigcup_{\mathscr{B}_{\lambda}}R\right) &\leq \frac{1}{\ep}\sum_{\mathscr{B}_{\lambda}}\lambda(R\cap K^c)
		\\
		&\leq \beta^3.
	\end{align*}
	Therefore, $\displaystyle \mu\Big(\bigcup_{\mathscr{B}_\lambda}R\Big) < \ep^3$ from the choice of $\beta$.
	
Furthermore, again by our choice of $\beta$ and \eqref{eq: scribed R}, we know that $\displaystyle \mu\left(K\vartriangle \bigcup_{R\in\mathscr{R}}R\right) < \ep^3$, and so, defining $\mathscr{B}_\mu$ analogously as the set of $R\in \mathscr{R}$ with $\mu(R\cap K) < (1-\ep)\mu(R)$, we have that
$$\mu\left(\bigcup_{\mathscr{B}_{\mu}}R\right) \leq \frac{1}{\ep}\sum_{\mathscr{B}_{\mu}}\mu(R\cap K^c)\leq \ep^2.$$

Consequently,
$$\displaystyle \mu\left(\bigcup_{\mathscr{R}\setminus (\mathscr{B}_\lambda\cup \mathscr{B}_\mu)} R\right) > \mu(K) - \ep^3 - \ep^3 - \ep^2 > 1 - 3\ep^3 - \ep^2 > 1 - 2\ep^2.$$
Henceforth, we take our collection $\{R_i\}_{i=1}^n$ to be $\mathscr{R}\setminus (\mathscr{B}_\mu \cup \mathscr{B}_{\lambda})$, and we take
$$G_i := R_i\cap K.$$ 
In other words, our collection $\{R_i\}_{i=1}^n$ consists of $R\in \mathscr{R}$ whose intersection with $K$ has large relative measure with respect to both $\mu$ and $\lambda$.
The first two inequalities in the lemma follow from the fact that $R_i$ belongs to neither $\mathscr{B}_\mu$ nor $\mathscr{B}_\lambda$.

All that remains is to show that $\mu(\bigcup_{i=1}^n G_i) \geq 1-\ep$. It suffices to show that $\mu(\bigcup_{i=1}^n R_i) \geq 1-\ep$, as this implies
$$\mu\left(\bigcup_{i=1}^n G_i\right) = \sum_{i=1}^n \mu(G_i) \geq (1-\ep) \sum_{i=1}^n \mu(R_i) \geq (1-\ep)^2 \geq 1-2\ep.$$
Now observe that
	$$\mu\left(K\setminus \bigcup_{i=1}^n R_i\right) \leq \mu\left(K\setminus \bigcup_{\mathscr{R}}R\right) + 2\ep^2 \leq \ep^3 + 2\ep^2 \leq 3\ep^2.$$
Therefore, since $\mu(K) > 1 - \ep^3$, we have that
	$$\mu\left(\bigcup_{i=1}^n R_i\right) \geq \mu\left(K\cap \bigcup_{i=1}^n R_i\right) = \mu(K) - \mu\left(K\setminus \bigcup_{i=1}^n R_i\right) > 1 - \ep^3 - 3\ep^2 > 1 - \ep.$$
\end{proof}

To complete our proof of the proposition, we need to show the third condition, that for each $R_i$, $\abs{\frac{d\mu|_{R_i}}{d\lambda|_{R_i}}|_{G_i} - 1} < \ep_0$. Informally, this condition says that for any measurable subset of $G_i$, the measures with respect to $\mu|_{R_i}$ and $\lambda|_{R_i}$ are approximately the same. Now, recall $G_i$ has relatively large $\mu$ and $\lambda$ measure in $R_i$, so we have
	$$1-\ep < \frac{d\mu|_{R_i}}{d\mu|_{G_i}} \leq 1 \quad \text{ and }\quad 1 \leq \frac{d\lambda|_{G_i}}{d\lambda|_{R_i}} < \frac{1}{1-\ep} \leq 1 + 2\ep.$$
	Additionally, recall from the construction that elements of $\mathscr{R}$ have small enough diameter (specifically less than $\delta$) so that $\abs{\frac{d\mu|_{G_i}}{d\lambda|_{G_i}} - 1}<\ep.$ Therefore, since for any measurable $B\subset G_i\subset R_i$, we have
	$$\frac{\mu|_{R_i}(B)}{\lambda|_{R_i}(B)} = \frac{\mu(B)\lambda(R_i)}{\mu(R_i)\lambda(B)} = \left(\frac{\mu|_{G_i}(B)}{\lambda|_{G_i}(B)}\right)\left(\frac{\mu(G_i)}{\mu(R_i)}\right)\left(\frac{\lambda(R_i)}{\lambda(G_i)}\right),$$
	it follows that $\abs{\frac{d\mu|_{R_i}}{d\lambda|_{R_i}} - 1} < 4\ep =\ep_0$.
\end{proof}

\begin{cor}\label{cor: general covering}
Let $X$ be a compact metric space, $\mu$ a Borel probability measure, and suppose for $\mu$-a.e. $x\in X$, there exists an open, positive $\mu$-measure set $U_x\ni x$ such that $\mu|_{U_x} \ll \lambda_x$ for some probability measure $\lambda_x$. Let $\mathscr{C}$ be a generating semi-ring for the Borel $\sigma$-algebra $\mathcal{B}$. Then for all $\ep_0 > 0$, there exists a pairwise disjoint collection $\{R_i\}\subset\mathscr{C}$ satisfying the same conditions of Proposition \ref{prop: creation of general covering}, with each $R_i$ equipped with a measure $\lambda_i|_{R_i}$.
\end{cor}

\begin{proof}
Let $A$ denote the full measure set of $x\in X$ for which there exists an open set $U_x$ as described in the statement. Then, for all $\delta > 0$, define $B_\delta\supset X\setminus A$ to be an open set with $\mu(B_\delta) < \delta$. Then $\{U_x\}_{x\in A}\cup B_{\ep_0/2}$ is an open cover of $X$, and so there exists $x_1,\cdots, x_n$ such that $X = B_{\ep_0/2} \cup \bigcup_{i=1}^n U_{x_i}$. Consequently, we see that $\mu(\bigcup_{i=1}^n U_{x_i}) > 1 - \frac{\ep_0}{4}$. Now, for each $i$, we  consider a compact subset of $U_{x_i}\setminus \bigcup_{j = 1}^{i-1} U_{x_j}$ denoted $K_i$, with $\mu(K_i) > (1-\frac{\ep_0}{4})\mu(U_{x_i}\setminus \bigcup_{j=1}^{i-1} U_{x_j})$. 

We then apply Proposition \ref{prop: creation of general covering} to the compact metric space $K_i$ with probability measures $\mu|_{K_i}$ and $\lambda_{x_i}|_{K_i}$ and the generating semi-ring $\mathscr{C} \cap K_i$ for constant $\frac{\ep_0}{4}$. Now, the final step is to ensure that our collection of elements of $\mathscr{C}$ is pairwise disjoint, as a priori, one could have the same $R\in\mathscr{C}$ which overlaps both $K_i$ and $K_j$. So, we work only with $R\in\mathscr{C}$ with diameter less than the minimum distance between any two of our compact sets $K_i,K_j$. This is still a generating semiring, and there are finitely many such compact sets, so our proof is complete.
\end{proof}

\begin{rem}
Proposition \ref{prop: creation of general covering} can almost be interpreted as saying that there is an $\ep$-regular covering as defined by Chernov and Haskell \cite{chernov1996nonuniformly}. The only missing component is that we do not impose extra assumptions such as complete hyperbolicity on the map, and as such, we do not make any claims on the structure of $\mathscr{C}$. 
Once we begin restrict our setting to be a Smale space, the generating semi-ring $\mathscr{C}$ will be the set of rectangles (defined in the next section).
\end{rem}

\section{Proof of Theorem \ref{thm: abstract}}

From here on out, we restrict ourselves to a Smale space $(X,f)$ with hyperbolicity constant $\chi$ and constant $\eps_X$ (see Section \ref{subsec:metric Anosov} for details about this setting). We also equip our space with a complete, invariant, Borel probability measure $\mu$ with measurable local product structure, with $\mathcal{B}$ denoting the completion of the Borel $\sigma$-algebra. Thus, $(X,\mathcal{B},f,\mu)$ is a Lebesgue space. We will also work with a finite, measurable partition $\xi$ with the property that there exists $C_{\xi} > 0$ such that for all $\ep > 0$, 
\begin{equation}\label{eq: boundary measure}
\mu(B(\partial \xi, \ep)) < C_\xi\ep,
\end{equation}
where $B(\partial \xi,\ep):=\{x \in X \colon d(x,\partial A) \leq \eps \text{ for some } A \in \xi\}$.
These partitions can be found with arbitrarily small diameter in any compact metric space. This result is contained in the remark following \cite[Lemma 4.1]{ornstein1998bernoulli}, but in the interest of self-containment, we provide the proof here. The key fact underlying it is the following lemma.

\begin{lem}[\cite{ornstein1998bernoulli}]
If $(X,d)$ is a compact metric space and $\mu$ is a probability measure on $X$, then given $x_0\in X$ and an interval $[a,b]\subset (0,\infty)$ with $a < b$, there exists $r\in [a,b]$ such that for all $\eps > 0$,
$$\mu\left(\left\{x\mid \abs{d(x,x_0)  - r} \leq \eps\right\}\right) \leq \frac{6}{b-a}\eps.$$
\end{lem}

Using this, we get the desired corollary.

\begin{cor}
If $(X,d)$ is a compact metric space and $\mu$ is a probability measure on $X$, then for all $\delta > 0$, we can construct a finite Borel measurable partition $\xi$ with the property that there exists $C_\xi > 0$ such that for all $\eps > 0$,
$$\mu(B(\partial \xi,\eps)) < C_\xi\eps.$$
\end{cor}

\begin{proof}
Let $4\delta > 0$ be our desired maximal diameter. Then, in the previous lemma, define $r_x > 0$ to be the number given for the point $x\in X$ and the interval $[\delta,2\delta]$. Then, as $\{B(x,r_x)\}_{x\in X}$ is an open cover of $X$, by compactness, we can take $\{A_i\}_{i=1}^n$ to be a finite subcover. Now define the partition $\xi = \{U_1,U_2,\cdots, U_n\}$ by taking $U_1 = A_1$, and $U_i = A_i\setminus \bigcup_{j=1}^{n-1}U_j.$. This is a Borel partition by construction, and each element has diameter at most $2r_{x_i} \leq 4\delta$. So, we need only check the boundary condition. Let $\eps > 0$ and $1\leq i\leq n$ be arbitrary. Suppose that $d(x,\partial U_i) \leq \eps$. Then $d(x,\partial A_j)) \leq \eps$ for some $1\leq j\leq n$. Therefore,
$$\mu(B(\partial \xi),\eps) \leq \mu\left(\bigcup_{i=1}^n B(\partial A_i,\eps)\right) \leq \sum_{i=1}^n\mu\left(\left\{x \mid \abs{d(x,x_i) - r} \leq \eps\right\}\right) \leq \sum_{i=1}^n \frac{6}{\delta}\eps. = \frac{6n}{\delta}{\eps}.$$
Taking $C_\xi = \frac{6n}{\delta}$ completes our proof.
\end{proof}

\subsection{Rectangles} 
We now introduce the notion of rectangles, which will serve as the building blocks when we construct the map $\theta$ from Corollary \ref{cor: VWB}.

\begin{defn}
Let $z\in X$, and consider Borel $A\subset X^s(z,\delta_1)$ and Borel $B\subset X^u(z,\delta_2)$ for some $\delta_1,\delta_2 > 0$. If these are chosen small enough so that $[A,B]$ is well-defined, we call $[A,B]$ a \textit{rectangle}.
\end{defn}

In particular, we can observe from this definition that a rectangle $R$ is the image of a Borel set under a homeomorphism, and thus is Borel. We can also observe from this definition that for a given rectangle $R$, $[R,R] = R$.

We also can observe the following result without difficulty.
\begin{prop}
Rectangles can be constructed with arbitrarily small diameter, and in particular, the set of rectangles is a generating semiring for $\mathcal{B}$.
\end{prop}

We will eventually need the notion of a layerwise intersection in order to define $\theta$ from Corollary \ref{cor: VWB}.
\begin{defn}
Given a rectangle $R$, then $A\subset R$ intersects $R$ \textit{layerwise} if for all $x\in A$, the unstable set of $x$ at scale $\eps_X$ stretches across the entirety of $R$, i.e.
	$$X^u(x,\eps_X)\cap R \subset X^u(x,\eps_X)\cap A.$$
\end{defn}

As it turns out, given our partition $\xi$, and finitely many disjoint rectangles, ``most'' $E\in \bigvee_{m_1}^{m_2}f^j\xi$ have a subset of large relative measure which intersects each rectangle layerwise, provided that $m_1,m_2$ are large enough.

\begin{prop}\label{prop: layerwise} Let $\mathcal{R} = \{R_i\}_{1}^n$ be a finite set of disjoint rectangles in $X$.
For all $0 < \ep < \eps_X$, there exists $N \in \N$ such that for all $m_2 \geq m_1 \geq N$, for $\ep$-almost every $E\in \bigvee_{m_1}^{m_2} f^j\xi$, there exists Borel $A\subset E$ which intersects each $R_i$ layerwise and satisfies $\mu(A) \geq (1-\ep)\mu(E)$.
\end{prop}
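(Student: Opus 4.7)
The plan is to exploit the fact that for large $m_1$, atoms $E$ of $\bigvee_{m_1}^{m_2} f^i\xi$ are almost saturated by local unstable sets, and then take $A$ to consist of full unstable leaves of each $R_j$ lying inside the good part of $E$.

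First, we set $\delta > \max_j \mathrm{diam}(R_j)$ and introduce the bad set
\[
\mathcal{B} := \bigcup_{i=m_1}^{m_2} f^i\, B(\partial \xi, \chi^i \delta).
\]
By $f$-invariance of $\mu$ together with \eqref{eq: boundary measure}, we have $\mu(\mathcal{B}) \leq \sum_{i\geq m_1} C\chi^i \delta = O(\chi^{m_1})$, so for $N$ large enough and $m_2 \geq m_1 \geq N$ we may assume $\mu(\mathcal{B}) < \ep^2$. The key observation is that if $x \notin \mathcal{B}$, then $W^u_\delta(x)$ is contained in the same atom of $\bigvee_{m_1}^{m_2} f^i\xi$ as $x$: for $y \in W^u_\delta(x)$ and $i \in [m_1, m_2]$, hyperbolicity gives $d(f^{-i}y, f^{-i}x) \leq \chi^i\delta < d(f^{-i}x, \partial\xi)$, forcing $f^{-i}y$ into the same element of $\xi$ as $f^{-i}x$. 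Lemma \ref{lem: large intersections} then guarantees that $\ep$-almost every atom $E$ satisfies $\mu(E \cap \mathcal{B}) < \ep\mu(E)$.

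For each such good $E$, let $A_j$ be the union of all local unstable leaves of $R_j$ contained entirely in $E \setminus \mathcal{B}$, and set
\[
A := \bigcup_j A_j \cup \Bigl( (E \setminus \mathcal{B}) \setminus \bigcup_j R_j \Bigr).
\]
Since the $R_j$ are disjoint, $A \cap R_j = A_j$ is a union of local unstable leaves of $R_j$ by construction, so $A$ meets each $R_j$ layerwise. The complement $E \setminus A$ is contained in $E \cap \mathcal{B}$ together with, for each $j$, the set of $x \in E \cap R_j$ whose $R_j$-unstable leaf meets $\mathcal{B}$; the first term contributes at most $\ep\mu(E)$.

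The main obstacle will be controlling the second contribution, since a priori the $W^u_{R_j}$-saturation of $\mathcal{B}\cap R_j$ inside $R_j$ could be much larger than $\mu(\mathcal{B})$ itself. For this we will invoke the hypothesis of Theorem \ref{thm: abstract} that $\mu$ is absolutely continuous with respect to a product measure $\rho \times \nu$ on the local stable and unstable sets near each $R_j$. A Fubini argument then bounds the $\mu$-measure of the $W^u_{R_j}$-saturation by the $\rho$-measure of the stable-transversal projection of $\mathcal{B}\cap R_j$, up to constants from the Radon-Nikodym derivative; exploiting that each layer $f^i B(\partial \xi, \chi^i \delta)$ is of order $\chi^{2i}\delta$ in the stable direction, this projection can be estimated geometrically and absorbed by taking $N$ large enough, yielding $\mu(A) \geq (1-\ep)\mu(E)$ and finishing the proof.
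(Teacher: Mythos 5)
Your overall strategy is close to the paper's, and the preliminary observation (that $x\notin\mathcal{B}$ forces $W^u_\delta(x)$ into the same atom of $\bigvee_{m_1}^{m_2}f^i\xi$) is correct. The genuine gap lies in your choice of bad set. With $\mathcal{B} = \bigcup_{i=m_1}^{m_2}f^iB(\partial\xi,\chi^i\delta)$, a point $x\notin\mathcal{B}$ has its leaf $W^u(x)\cap R_j$ contained in $E$, but not necessarily in $\mathcal{B}^c$; hence your $A_j$ can omit a large part of $E\cap R_j$, and, as you yourself observe, you must bound the $\mu$-measure of the $W^u_{R_j}$-saturation of $\mathcal{B}\cap R_j$. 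Your proposed fix does not work as sketched. First, $\mu$ is merely absolutely continuous with respect to $\rho\times\nu$, so $d\mu/d(\rho\times\nu)$ has no uniform upper bound and you cannot pass from a small $(\rho\times\nu)$-measure of the saturation to a small $\mu$-measure. Second, nothing in the hypotheses says that the $\rho$-measure of the stable-transversal projection of a set of stable width $\chi^{2i}\delta$ tends to zero: $\partial\xi$ is an arbitrary set and $\rho$ need have no geometric regularity, so ``thin in the stable direction'' does not translate into ``small $\rho$-measure.'' You are also invoking measurable local product structure, a hypothesis that neither the statement of the proposition nor the paper's proof of it requires.

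The paper avoids the saturation problem entirely by tailoring the bad set to the layerwise condition itself. For each $k$ and $F\in\xi$, set
$A^k_F := \{x\in f^kF\cap\bigcup_i R_i : W^u(x)\cap R(x)\not\subset f^kF\cap R(x)\}$,
and let $B=\bigcup_{k\geq N}\bigcup_F A^k_F$. Each $f^{-k}A^k_F$ lies within $C\chi^k$ of $\partial\xi$, so \eqref{eq: boundary measure} gives $\mu(B)<\ep^2$ for $N$ large, and Lemma \ref{lem: large intersections} applies as you intended. The crucial structural advantage is that $B^c$ is leaf-closed inside each rectangle: if $x\in B^c\cap R_i$ and $y\in W^u(x)\cap R_i$, then for each $k\geq N$, since $x\notin A^k_{F'}$ (where $F'$ is the atom of $\xi$ with $f^{-k}x\in F'$), the full leaf $W^u(x)\cap R_i$ sits in $f^kF'$; thus $y\in f^kF'$, and by the same inclusion $y\notin A^k_{F'}$, so $y\notin B_k$. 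Hence $W^u(x)\cap R_i\subset B^c$, and $A=E\cap B^c$ meets each $R_i$ layerwise directly, with no Fubini argument and no product-structure input.
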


\begin{proof}
For $ x\in \bigcup_{R\in\mathcal{R}}R$, let $R(x) \in \mathcal{R}$ be the rectangle which contains $x$. For any $F \in \xi$ and $k\in \N$, we denote the subset of $f^kF$ consisting of points that do not intersect the rectangles in $\mathcal{R}$ layerwise as
$$A^k_F := \left\{x\in f^k F \cap \bigcup_{R\in\mathcal{R}}R \bigm| X^u(x,\ep)\cap R(x) \not\subset f^k F\cap R(x)\right\}.$$

This can equivalently be written as $\bigcup_{R\in\mathcal{R}}f^kF\cap [R,R\setminus f^kF]$, as $x\in [R(x),R(x)\setminus f^kF]$ if and only if $X^u(x,\ep)$ has non-empty intersection with the $R(x)\setminus f^kF$. This makes it clear that $A^k_F$ is Borel, as it is the intersection of Borel sets.

Observe that as rectangles have diameter at most $\eps_X$, then if $x\in  f^{-k}A^k_F$, we have $X^u(f^kx,\eps_X)\cap \partial F\neq \emptyset$. Consequently, $d(x,\partial F) < \eps_X\chi^{k}$, where $\chi$ is the hyperbolicity constant for $f$. Therefore, $\mu(A^k_F) < (C_\xi\eps_X)\chi^k$

Hence, defining $\displaystyle B_k = \bigcup_{F\in \xi}A^k_F $ to be the union of all such ``bad'' parts of our atoms, by our choice of partition \eqref{eq: boundary measure}, it follows that $\mu(B_k) \leq (C_{\xi}\eps_X)\chi^{k}$. Now choose $N \in \N$ large enough so that $\displaystyle B := \bigcup_{k=N}^{\infty} B_k$ satisfies $\mu(B) < \ep^2$.

Let $m_2 \geq m_1 \geq N$ be arbitrary. As $\mu(B) < \ep^2$, by Lemma \ref{lem: large intersections}, for $\ep$-almost every $E\in \bigvee_{m_1}^{m_2}f^i\xi$, we have $\mu(E\cap B^c) \geq (1-\ep)\mu(E)$. For such $E$, we set $A = E\cap B^c$ which intersects each $R_i$ layerwise, as required.
\end{proof}

The next proposition is the main result of this section. Its proof, which appears in Section \ref{sec: final}, involves several intermediate steps making use of results established in Section \ref{sec: Bernoulli}. Hence, we postpone it to the next proposition for better readability.

\begin{prop}\label{prop: pre most E}
For any $\ep>0$, there exists $N\in \N$ and $C \geq 2$ such that for all $m_2 \geq m_1 \geq N$, there exists $\mathscr{B} \subseteq \bigvee_{m_1}^{m_2} f^i\xi$ with $\mu(\bigcup\limits_{E \in \mathscr{B}} E) > 1-\ep$ such that for every $E \in \mathscr{B}$ there exists a Borel subset $A\subset E$ and a $C\ep$-measure-preserving map $\theta : (A,\mathcal{B}\cap A,\mu|_A)\to (X,\mathcal{B},\mu)$ satisfying $$\mu|_E(A) \geq (1-\ep) \text{ and }\theta(x) \in X^s(x,\eps_X) \text{ for all }x\in A.$$ 
Moreover, given any $\delta>0$, the construction can be performed so that $d(x,\theta(x)) <\delta$ for every $x\in A$.
\end{prop}

The following corollary extends the domain of $\theta$ introduced in the above proposition from $A$ to $E$ while maintaining its properties.

\begin{cor}\label{cor: culmination}
The map $\theta \colon A \to X$ from the above proposition can be extended to a $C\ep$-measure-preserving map from $(E,\mathcal{B}\cap E,\mu|_E)$ to $(X,\mathcal{B},\mu)$ while maintaining $\theta(x) \in X^s(x,\eps_X)$ for every $x\in E$.
\end{cor}
\begin{proof}
We trivially extend $\theta : (E,\mu|_E)\to (X,\mu)$ by setting $\theta(x) = x$ for $x\notin A$. We need to check that the map is $\ep$-measure-preserving.

First, we have that $(E,\mathcal{B}\cap E,\mu|_E)$ is a Lebesgue space, as $E\subset X$ is measurable and $\mu|_E$ is the conditional measure. The remainder of the proof uses only the fact that the original map $\theta : (A,\mathcal{B}\cap A,\mu|_A)\to (X,\mathcal{B},\mu)$ is $C\ep$-measure preserving. Denoting by $G$ the subset of $A$ on which $\theta$ is $C\ep$-measure preserving (see Definition \ref{defn: almost mp}), the same set $G$ can be used for verifying the $\ep$-measure preserving property for the extended map $\theta : (E,\mathcal{B}\cap E,\mu|_E)\to (X,\mathcal{B},\mu)$.
Indeed, from its definition we have
$$\left|\frac{\mu(\theta D)}{\mu(D)} - 1\right| < C\ep$$
for any positive measure $D\subset G$ (noting that if $\mu|_A(D) > 0$, then $\mu|_E(D) > 0$, as $A\subset E$).
Moreover, the set $G$ still has large $\mu$-measure in $E$ because $A$ does:
$$\mu(B) \geq (1-\ep)\mu(A) \geq (1-\ep)^2\mu(E) \geq (1-2\ep)\mu(E).$$
Consequently, we see that $\theta : (E,\mathcal{B}\cap E,\mu|_E)\to (X,\mathcal{B},\mu)$ is $C\ep$-measure-preserving.
\end{proof}

Assuming Proposition \ref{prop: pre most E}, we can complete the proof of Theorem \ref{thm: abstract}.

\begin{prop}
The partition $\xi$ satisfying \eqref{eq: boundary measure} is Very Weak Bernoulli.
\end{prop}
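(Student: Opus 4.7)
The plan is to invoke Corollary \ref{cor: VWB}: given $\ep > 0$, I must produce $N \in \N$ such that for $m_2 \geq m_1 \geq N$ and for $\ep$-almost every atom $E \in \bigvee_{m_1}^{m_2} f^i\xi$, there is an $\ep$-measure-preserving map $\theta : (E, \mu|_E) \to (X, \mu)$ together with a subset $D \subseteq E$ with $\mu|_E(D) > 1-\ep$ on which $h(x, \theta(x)) < \ep$ uniformly in $n > 0$, relative to the partition sequences $\{f^{-i}\xi \mid E\}_1^n$ and $\{f^{-i}\xi\}_1^n$. The map $\theta$ comes directly from Proposition \ref{prop: culmination}, so the only work is to locate a large $D$ on which every relevant partition coordinate of $x$ and $\theta(x)$ agrees.

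The key combinatorial observation is that $(f^{-i}\xi \mid E)(x) \neq (f^{-i}\xi)(\theta(x))$ exactly when $\xi(f^i x) \neq \xi(f^i \theta(x))$. Since Proposition \ref{prop: culmination} supplies $d(f^i x, f^i \theta(x)) \leq \chi^i \delta$ for every $i \geq 0$, a disagreement can only occur when $f^i x \in B(\partial \xi, \chi^i \delta)$. Setting $B_i := f^{-i}\bigl(B(\partial \xi, \chi^i \delta)\bigr)$, $f$-invariance of $\mu$ together with the boundary estimate \eqref{eq: boundary measure} yields $\mu(B_i) \leq C \chi^i \delta$, so the union $B := \bigcup_{i \geq 1} B_i$ satisfies $\mu(B) \leq C\delta \chi/(1-\chi)$.

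Given $\ep > 0$, I first choose $\delta > 0$ small enough that $\mu(B) < \ep^2$. Lemma \ref{lem: large intersections} then bounds by $\ep$ the $\mu$-measure of the union of those atoms $E$ with $\mu|_E(B) \geq \ep$. I next invoke Proposition \ref{prop: culmination} with this $\delta$ and some $\ep_0 \leq \ep$ to obtain the required $N$ and, for $\ep_0$-almost every $E$, an $\ep_0$-measure-preserving map $\theta$ satisfying the exponential stable-closeness \eqref{eq: distance}. Intersecting the two ``$\ep$-almost every'' conditions on $E$ and setting $D := E \setminus B$, we have $\mu|_E(D) > 1-\ep$, and by the construction above every $x \in D$ satisfies $\xi(f^i x) = \xi(f^i \theta(x))$ for every $i \geq 1$. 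Consequently $h(x, \theta(x)) = 0 < \ep$ for \emph{every} $n > 0$, and Corollary \ref{cor: VWB} concludes that $\xi$ is Very Weak Bernoulli.

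I foresee no serious obstacle. The uniformity in $n$ that might naively call for a maximal-function estimate is bypassed because on $D$ the disagreement sum is identically zero, not merely small on average; the exponential contraction along stable sets dominates cleanly. The only care needed is in the ordering of the quantifiers, so that the $\ep$-measure loss from atoms with $\mu|_E(B) \geq \ep$ can be absorbed together with the $\ep_0$-measure loss already built into Proposition \ref{prop: culmination}.
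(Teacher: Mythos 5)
Your proof is correct and follows the paper's approach: invoke Proposition~\ref{prop: culmination} to get the stable-contracting map $\theta$, use the boundary estimate~\eqref{eq: boundary measure} to bound the set of points whose forward orbit ever straddles $\partial\xi$ relative to $\theta$, and then pass to most atoms $E$ via Lemma~\ref{lem: large intersections} and conclude with Corollary~\ref{cor: VWB}. Your bookkeeping of the bad set $B=\bigcup_{i\geq1}f^{-i}B(\partial\xi,\chi^{i}\delta)$ via the geometric series is in fact a touch cleaner than the paper's, which compresses the same estimate into the (imprecisely stated) inclusion $B_E\subset B(\partial\xi,\delta)$ rather than keeping track of the $f^{-m}$-pullbacks; the difference only affects the constant in the choice of $\delta$, so the two arguments are the same in substance.
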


\begin{proof}
We will use Corollary \ref{cor: VWB} to verify that $\xi$ is Very Weak Bernoulli. Let $\ep > 0$ be arbitrary. Choose $\delta \in (0, \ep^2/C_{\xi})$ where $C_{\xi}$ is the constant from \eqref{eq: boundary measure}; such choice of $\delta$ implies $\mu(B(\partial \xi,\delta)) \leq \ep^2$. Let $N \in \N$ be the integer obtained from applying Proposition \ref{prop: pre most E} to $\ep$ and $\delta$. 

For any $m_2 \geq m_1\geq N$, let $\mathscr{B}$ be the elements of $\bigvee_{m_1}^{m_2} f^i\xi$ from Corollary \ref{cor: culmination} with $\mu(\bigcup\limits_{E \in \mathscr{B}} E) > 1-\eps$. 
For any such $E \in \mathscr{B}$ and any $n\in \N$, consider two sequences of partitions: $\{f^{-i}\xi \mid E\}_1^n$ of $E$ and $\{f^{-i}\xi\}_1^n$ of $X$. If
$$h(x,\theta(x)) > \ep$$
for some $x\in E$, using the fact that $h(x,\theta(x))\neq 0$, there exists $1 \leq m \leq n$ so that $$f^{-m}\xi|_E (x)\neq f^{-m}\xi(\theta(x));$$ that is, $x$ and $\theta(x)$ belong to different partition elements of $f^{-m}\xi$. Since $\theta(x) \in X^s(x,\eps_X)$ and $d(x,\theta(x)) < \delta$, the distance between $f^{-m}\partial \xi$ and $x$ must also be at most $\delta$. Furthermore, under forward iterations of $f$, this distance will only decrease. In particular, we have $f^mx\in B(\partial \xi,\delta)$. Setting
$$B_E := \{x \in E \mid h(x,\theta(x)) \geq  \ep \}\subset B(\partial \xi,\delta).$$
and $B := \bigcup\limits_{E \in \mathscr{B}} B_E$, we then have $\mu(B) \leq \ep^2$ from the choice of $\delta$.

By applying Lemma \ref{lem: large intersections}, which amounts to excluding further those elements $E$ from $\mathscr{B}$ which satisfy $\mu(E \cap B^c) <(1-\ep)\mu(E)$, whose union has $\mu$-measure at most $\ep$, we obtain $\mathscr{C} \subseteq \mathscr{B}$ with $\mu\left(\bigcup\limits_{E \in \mathscr{C}} E\right) > 1 - 2\eps$ such that for every $E\in\mathscr{C}$ the subset $D:=E\cap B^c$ satisfies $$\mu|_E(D) > 1-\ep \text{ and }h(x,\theta(x)) < \ep \text{ for }x\in D.$$
 Therefore, $\xi$ is Very Weak Bernoulli from Corollary \ref{cor: VWB}.
\end{proof}

\begin{cor}
The system $(X,f,\mu)$ satisfying the assumptions of Theorem \ref{thm: abstract} is Bernoulli.
\end{cor}

\begin{proof}
We can find a generating sequence of Very Weak Bernoulli partitions, because we can choose a partition $\xi$ with an arbitrarily small diameter; see \cite[Lemma 4.1]{ornstein1998bernoulli}. 
Hence, from the above proposition the system $(X,f,\mu)$ is Bernoulli as required.
\end{proof}

\section{Proof of Proposition \ref{prop: pre most E}}\label{sec: final}
In this section we prove Proposition \ref{prop: pre most E}. We take $(X,\mathcal{B},f)$ to be a Smale space equipped with a complete, Borel, invariant measure $\mu$ which has measurable local product structure and has the $K$-property. We also fix constants $\ep \in (0,\frac{1}{10})$ and $\delta > 0$.

Now we recall what we wish to show. We need to establish the existence of $N\in\N$ and $C \geq 2$ so that for all $m_2\geq m_1\geq N$, there exists a collection $\mathscr{B}$ of partition elements of $\bigvee_{j=m_1}^{m_2} f^j\xi$ so that for all $E\in\mathscr{B}$:
\begin{itemize}
    \item There exists Borel $A\subset E$ with $\mu|_E(A)\geq (1-\ep)$
    \item There exists a $C\ep$-measure-preserving map $\theta : (A,\mathcal{B}\cap A,\mu|_A)\to (X,\mathcal{B},\mu)$ with $\theta(x)\in X^s(x,\eps_X)$ for all $x\in A$
    \item $d(x,\theta(x)) < \delta$ for all $x\in A$.
    \item $\mu\left(\bigcup_{E\in\mathscr{B}}E\right) > 1 - \ep.$
\end{itemize}

We provide a brief overview before elaborating the details. Note that we already have a candidate for the subset $A \subset E$ from Proposition \ref{prop: layerwise}. However, it is difficult to establish the $C\ep$-measure-preserving map $\theta$ directly, so we introduce an auxiliary measure $\lambda_0$ built from the local product measure on $X$ which facilitates the comparison between $\mu|_A$ and $\mu$. This comparison is made possible via well-chosen large subsets of $A$ and $X$ with certain properties.   
In the next subsection, we describe how we choose these large subsets to facilitate the comparison, and in the subsequent subsection we describe the construction of $\theta$ with the desired properties.

\subsection{Choice of a suitable subset of \texorpdfstring{$A$}{A}}
Because $\mu$ has measurable local product structure, and rectangles are a generating semi-ring, we begin by applying Corollary \ref{cor: general covering} at scale $\ep^2$ to obtain a finite collection of rectangles $\{R_i\}_{i=1}^n$ of diameter at most $\frac{\delta}{2}$ equipped with non-atomic Borel product measures $\lambda_i$ and designated subsets $G_i\subset R_i$. These satisfy the following properties: 
\begin{enumerate}
    \item\label{1} $\mu|_{R_i}(G_i) > 1-\ep^2$ and $\lambda_i(G_i) > 1-\ep^2$; 
	\item\label{2} On $G_i$, $\abs{\frac{d\mu|_{R_i}}{d\lambda_i} - 1} < \ep^2$;
	\item\label{eq: mu(G)} $\mu(G) \geq 1-\ep^2$ where $G := \bigcup\limits_{i=1}^n G_i$.
\end{enumerate}
We stress that the scale chosen to apply Corollary \ref{cor: general covering} is $\ep^2$; the choice of this constant will be used in Corollary \ref{cor: mu and lambda0}. The sets $G_i$ chosen with these properties play an important role in that the large subset of $A$ mentioned above will be given by $A \cap \bigcup G_i$.

\begin{lem}\label{lem: A subset E}
There exists $N\in \N$ such that for all $m_2 \geq m_1 \geq N$, there exists $\mathscr{B}_1 \subset \bigvee_{j = m_1}^{m_2} f^j\xi$ with $\mu\left(\bigcup\limits_{E\in\mathscr{B}_1}E\right) \geq 1-\ep/2$ such that for every $E\in\mathscr{B}_1$, there exists $A\subset E$ satisfying 
\begin{enumerate}[label=(\alph*)]
	\item $A$ intersects each $R_i$ layerwise;
	\item $\mu(A) \geq (1-\ep)\mu(E)$;
	\item $\mu(A\cap G)\geq (1-\ep)\mu(A)$.
\end{enumerate}
\end{lem}
\begin{proof}
We begin by applying Proposition \ref{prop: layerwise} at scale $\frac{\ep}{4}$ to the collection of rectangles $\{R_i\}_{i=1}^n$. This gives us $N\in \N$ such that for all $m_2 \geq m_1 \geq N$, for $\frac{\ep}{4}$-almost every $E\in \bigvee_{m_1}^{m_2} f^i\xi$, there exists Borel $A\subset E$ which intersects each $R_i$ layerwise and satisfies 
\begin{equation}\label{eq: A and E}
\mu(A) \geq \Big(1-\frac{\ep}{4}\Big)\mu(E).
\end{equation}
By assumption, $\mu$ has the $K$-property, and so applying Proposition \ref{prop: K-partitions} we may assume $N$ is chosen large enough so that for all $m_2 \geq m_1 \geq N$, for $\frac{\ep}{4n}$-almost every $E\in \bigvee_{j=m_1}^{m_2} f^j\xi$ and for $1\leq i\leq n$, we have
\begin{equation}\label{eq: K apply}
\left|\frac{\mu(E\cap G_i)}{\mu(E)} - \mu(G_i)\right| \leq \frac{\ep}{4n}.
\end{equation} 
This establishes estimates on the intersection of $\frac{\ep}{4n}$-almost every partition element with \textit{every} $G_i\subset R_i$, for $1\leq i \leq n$.

With our choice of $N$ so defined, fix $\mathscr{B}_1$ to be the set of elements $E \in \bigvee_{m_1}^{m_2} f^j\xi$ that satisfy both \eqref{eq: A and E} and \eqref{eq: K apply}. Consequently, for every $E\in\mathscr{B}_1$, the layerwise subset $A\subset E$ guaranteed by \eqref{eq: A and E} satisfies conditions (a) and (b) of the lemma.

We now show the third condition (c). Observe that
$$\mu\left(\bigcup\limits_{E\in\mathscr{B}_1}E\right) \geq 1- \frac{\ep}{4} - \frac{\ep}{4n} \geq 1 - \frac{\ep}{2}.$$
Then for every $E \in \mathscr{B}_1$, summing $\mu(E \cap G_i)$ over all $i$ using \eqref{eq: K apply} gives 
\begin{equation}\label{eq: E n G}
\mu(E\cap G) = \sum\limits_{i=1}^n \mu(E \cap G_i) \geq \mu(E)\sum_{i=1}^{n}\mu(G_i) - \frac{\ep}{4n} \geq  \mu(E)\Big(1-\frac{3\ep}{4}\Big)
\end{equation}
where the last inequality uses the fact that $\mu(G) \geq 1-\ep^2$.
As $\mu(A)\geq (1-\frac{\ep}{4})\mu(E)$, it now follows that
$$\mu(A\cap G) \geq \mu(E\cap G) - \mu(E\setminus A) \geq (1-\ep)\mu(E) \geq (1-\ep)\mu(A),$$
completing the proof.
\end{proof}

This lemma verifies one of the conditions in Proposition \ref{prop: pre most E} for all $E\in\mathscr{B}_1$, namely that
$$\mu|_E(A) \geq 1-\ep.$$
However, we still need to show the existence of a $C\ep$-measure-preserving map $\theta \colon (A, \mathcal{B}\cap A,\mu|_A) \to (X,\mathcal{B},\mu)$ satisfying $\theta(x) \in X^s(x,\eps_X)$ for all $x\in A$. Though the definition of the map $\theta$ itself is not complicated (see Lemma \ref{lem: second half}), it is difficult to directly establish the $C\ep$-measure-preserving property. 

Therefore, we construct a new Borel measure $\lambda_0$ for which $\theta$ is \textit{measure-preserving} as a map from $(A,\mathcal{B}\cap A,\lambda_0|_A)$ and $(X,\mathcal{B},\lambda_0)$.

We then make use of Proposition \ref{prop: perturbation of measure preserving} to prove that $\theta$ is $C\ep$-measure-preserving property as a map from $(A,\mathcal{B}\cap A,\mu|_A)$ to $(X,\mathcal{B},\mu)$.

We cannot guarantee this property for every $E\in\mathscr{B}_1$. Hence, we will further exclude some partition elements from $\mathscr{B}_1$, and in doing so, obtain our desired collection $\mathscr{B}$. For now, we will assume that we work with a given $E\in\mathscr{B}_1$.

Recalling that on each $(R_i,\mathcal{B}\cap R_i)$ there is a Borel probability measure $\lambda_i$ given by $\lambda_i := \rho_i\times \nu_i$ we define a new probability measure
\begin{equation}\label{eq: lambda_0}
\lambda_0 := \left(\sum_{i=1}^n \mu(R_i)\right)^{-1}\left( \sum_{i=1}^n\mu(R_i)\lambda_i\right)
\end{equation}
obtained as the weighted sum of the measures $\{\lambda_i\}_{i=1}^n$ normalized to a probability measure on $(\bigsqcup_{i=1}^n R_i,\mathcal{B}\cap \bigsqcup_{i=1}^nR_i)$. For convenience, for all $i$, define $\lambda_i(Z) = 0$ for all $Z\in\mathcal{B}$ for which $Z$ is not already $\lambda_i$-measurable. This ensures that $\lambda_0$ is a measure with respect to $(X,\mathcal{B})$ while keeping our intuition that each $\lambda_i$ can be treated as just a measure on $(R_i,\mathcal{B}\cap R_i)$.

We now will verify that Proposition \ref{prop: perturbation of measure preserving} can be applied with:

\begin{align*}
(X_1,\mathcal{A}_1,\mu_1) &= (A,\mathcal{B}\cap A,\mu|_A), & (X_2,\mathcal{A}_2,\mu_2) &= (X,\mathcal{B},\mu),\\
(Y_1,\nu_1) &= (A\cap G, \lambda_0|_A), & (Y_2,\nu_2) &= (G,\lambda_0).\\
\end{align*}

\begin{rem}\label{rem: applying 4.10}
With the above choice of $X_i,Y_i,\nu_i,\mu_i$'s, we have already satisfied some of the necessary conditions to apply Proposition \ref{prop: perturbation of measure preserving}.
In particular,
$$\mu(A\cap G)\geq (1-\ep)\mu(A)  \text{ and } \mu(G) \geq 1-\ep^2,$$
by Lemma \ref{lem: A subset E} and Corollary \ref{cor: general covering}. These verify the conditions that $\mu_1(Y_1) \geq 1-\ep$ and $\mu_2(Y_2) \geq 1-\ep$ necessary to apply Proposition \ref{prop: perturbation of measure preserving}. 

\end{rem}

We now verify the second condition of Proposition \ref{prop: perturbation of measure preserving} on $\frac{d\mu}{d\lambda_0}$.
\begin{lem}\label{lem: B and G}
	For any measurable $B\subseteq X$, we have
	$$(1-2\ep^2)\lambda_0(B \cap G) \leq \mu(B \cap G) 
	\leq (1+\ep^2)\lambda_0(B \cap G).$$
In particular, $\abs{\frac{d\mu}{d\lambda_0} - 1 } < 2\ep^2$ on $G$.
\end{lem}
\begin{proof}
	Note that, because each $G_i \subset R_i$, we have
	$$\mu(B \cap G) = \sum\limits_{i=1}^n \mu(B \cap G_i) = \sum\limits_{i=1}^n \mu(R_i)\cdot  \mu |_{R_i}(B \cap G_i).$$ By our choice of rectangles, $\abs{\frac{d\mu|_{R_i}}{d\lambda_i}\Big|_{G_i}  - 1} < \ep^2$, and so it follows that
	$$(1-\ep^2)\left(\sum_{i=1}^n \mu(R_i)\right)\lambda_0(B \cap G) \leq \mu(B \cap G) 
	\leq (1+\ep^2)\left(\sum_{i=1}^n \mu(R_i)\right)\lambda_0(B \cap G).$$
	Since
	$$1 - \ep^2 \leq \mu(G) \leq \sum\limits_{i=1}^n \mu(R_i) \leq 1,$$ we are done.
\end{proof}

The following corollary will be used in verifying the remaining condition of Proposition \ref{prop: perturbation of measure preserving}.

\begin{cor}\label{cor: mu and lambda0}
For any measurable $B \subseteq G$, we have
$$\abs{\frac{d\mu|_{B}}{d\lambda_0|_{B}} - 1 } < 2\ep.$$
\end{cor}
\begin{proof}
This is a direct application of Proposition \ref{prop: rescaling derivative} using Lemma \ref{lem: B and G}.
\end{proof}

The final condition to be verified in order to apply Proposition \ref{prop: perturbation of measure preserving} is
\begin{equation}\label{eq: last to show}
\abs{\frac{d\mu|_{A}}{d\lambda_0|_{A}}\Big|_{A \cap G} - 1} < \ep
\end{equation}
and the existence of a  $(\lambda_0|_A,\lambda_0)$-measure-preserving map $\theta \colon (A, \mathcal{B}\cap A, \lambda_0|_A) \to (X,\mathcal{B},\lambda_0|_G)$, which is injective and bimeasurable. We establish this last condition in the next subsection.

\subsection{Verifying the final condition and constructing \texorpdfstring{$\theta$}{theta}}

Unlike the other conditions of Proposition \ref{prop: perturbation of measure preserving} that were relatively simple to verify, the last remaining condition is considerably more difficult to verify. 
This is because we cannot directly compare $\lambda_0(A)$ to $\lambda_0(E)$ or $\lambda_0(A \cap G)$ to $\lambda_0(E \cap G)$ as we did for $\mu$. So, we begin by observing that for all positive measure $B\subset A\cap G$,
\begin{align*}
\frac{\mu|_A(B)}{\lambda_0|_A(B)} &= \frac{\mu(B)}{\mu(A)}\frac{\lambda_0(A)}{\lambda_0(B)}
\\
&= \frac{\mu(B)}{\mu(A\cap G)}\frac{\mu(A\cap G)}{\mu(A)}\frac{\lambda_0(A)}{\lambda_0(A\cap G)}\frac{\lambda_0(A\cap G)}{\lambda_0(B)}
\\
&= \left(\frac{\mu(A\cap G)}{\mu(A)}\right)\left(\frac{\mu|_{A\cap G}(B)}{\lambda_0|_{A\cap G}(B)}\right)\left(\frac{\lambda_0(A)}{\lambda_0(A\cap G)}\right).
\end{align*}

From Lemma \ref{lem: A subset E} and Corollary \ref{cor: mu and lambda0}, we already have bounds on the first two terms of this expression, namely that the first term is at least $1-\ep$ and that the second term is between $1-2\ep$ and $1+2\ep$.
Thus, we devote the next two lemmas to establishing an upper bound on $\displaystyle \frac{\lambda_0(A)}{\lambda_0(A\cap G)}$. To get an effective bound, we further restrict (from $\mathscr{B}_1$) the elements of $\bigvee_{j=m_1}^{m_2}f^j\xi$ with which we work.

\begin{lem}\label{lem: B2}
There exists $\mathscr{B}_2\subset \mathscr{B}_1$ with $\mu\left(\bigcup\limits_{E\in\mathscr{B}_2}E\right) \geq 1-\ep$  such that for all $E \in \mathscr{B}_2$,
$$\lambda_0(E\cap G) \geq (1-\ep)\lambda_0(E).$$
\end{lem}
\begin{proof}
Using the fact that $\lambda_i(G) > 1 - \ep^2$, it follows that $\lambda_0(G^c) < \ep^2$ by the following calculation:
\begin{align*}
	\lambda_0(G^c) &= \left(\sum_{i=1}^n \mu(R_i)\right)^{-1} \sum_{i=1}^n \mu(R_i)\lambda_i(R_i\setminus G_i),
	\\
	&\leq \left(\sum_{i=1}^n \mu(R_i)\right)^{-1} \sum_{i=1}^n \mu(R_i)\ep^2= \ep^2.
\end{align*}
Therefore, applying Lemma \ref{lem: large intersections}, we have that the $\lambda_0$-measure of the union of all partition elements $E \in \bigvee_{j=m_1}^{m_2} f^j\xi$ with $\lambda_0(E\cap G) < (1-\ep)\lambda_0(E)$ is at most $\ep$.
Among such partition elements, we denote by $\mathscr{D}$ the set of all $E$ which also belong to $\mathscr{B}_1$. Then $\lambda_0\left(\bigcup\limits_{E\in\mathscr{D}} E\right) < \ep$.  
Since $\mu\left(\bigcup\limits_{E\in\mathscr{B}_1}E\right) \geq 1-\ep/2$ from Lemma \ref{lem: A subset E}, by setting $\mathscr{B}_2:=\mathscr{B}_1 \setminus \mathscr{D}$, showing that $\mu(\mathscr{D}) <\ep/2$ will complete our proof.

Since $\mathscr{B}_2 \subset \mathscr{B}_1$, each $E \in \mathscr{B}_2$ satisfies $\mu(E \cap G^c) \leq \ep\mu(E)$.
Then we have
\begin{align*}
	\mu\left(\bigcup_{E\in\mathscr{D}} E\right) &= \sum_{E\in\mathscr{D}} \Big(\mu(E\cap G) + \mu(E\cap G^c)\Big)
	\\
	&\leq \sum_{E\in\mathscr{D}} \Big((1+\ep^2)\lambda_0(E\cap G) + \ep \mu(E)\Big)
	\\
	&\leq (1+\ep^2)\lambda_0(\bigcup_{E\in\mathscr{D}} E) + \ep\mu(\bigcup_{E\in\mathscr{D}} E)
	\\
	&\leq (1+\ep^2)\ep + \ep\mu(\bigcup_{E\in\mathscr{D}} E),
\end{align*}
where the first inequality uses Lemma \ref{lem: B and G}. Consequently,
$\displaystyle \mu\left(\bigcup_{E\in\mathscr{D}}E\right) < \frac{1+\ep^2}{1-\ep}\ep < 2\ep$.
\end{proof}

Using Lemma \ref{lem: B2}, we establish the last remaining condition needed to apply Proposition \ref{prop: perturbation of measure preserving} (see Remark \ref{rem: applying 4.10}).

\begin{lem}
For every $E\in \mathscr{B}_2$ the subset $A\subseteq E$ from Lemma \ref{lem: A subset E} satisfies
$$\lambda_0(A\cap G)\geq (1-\ep)\lambda_0(A).$$
Moreover, for such $A$ we have $$\displaystyle \abs{\frac{d\mu|_{A}}{d\lambda_0|_{A}}\Big|_{A \cap G} - 1} < 6\ep.$$
\end{lem}
\begin{proof}
Since $$E \cap G = \big(A \cup (E \setminus A)\big) \cap G = \big(A \cap G\big) \cup \big((E\setminus A) \cap G\big)$$ and we have an estimate on $\lambda_0(E\cap G)$ from Lemma \ref{lem: B2} for every $E\in \mathscr{B}_2$, we will estimate the $\lambda_0$-measure of $(E\setminus A) \cap G$ below.

For every $E\in \mathscr{B}_2 \subseteq \mathscr{B}_1$, Lemma \ref{lem: B and G} and \eqref{eq: E n G} give 
$$\mu(E) \leq \frac{1}{1-\ep}\mu(E\cap G) \leq \frac{1+\ep^2}{1-\ep}\lambda_0(E\cap G) \leq (1+2\ep)\lambda_0(E).$$
In the reverse direction, \eqref{eq: A and E} implies that
$$\mu(E\cap A) \geq \left(1-\frac{\ep}{2}\right)\mu(E)$$
and so using Lemma \ref{lem: B and G} we have
$$\frac{\ep}{2} \mu(E) \geq \mu(E\cap A^c) \geq \mu((E\setminus A) \cap G) \geq (1-2\ep^2)\lambda_0((E\setminus A) \cap G).$$

Combining the two gives $$\displaystyle \lambda_0((E\setminus A) \cap G) \leq \frac{\ep(1+2\ep)}{2(1-2\ep^2)} \lambda_0(E) \leq \ep \lambda_0(E).$$

Now the first statement of the lemma follows:
\begin{align*}
	\lambda_0(A\cap G) &= \lambda_0(E\cap G) - \lambda_0((E\setminus A) \cap G)
	\\
	&\geq (1-\ep)\lambda_0(E) - \ep\lambda_0(E)
	\\
	&\geq (1 - 2\ep)\lambda_0(A)
\end{align*}
where we have used Lemma \ref{lem: B2} in the first inequality.

Since $\displaystyle \frac{1}{1-2\ep} \leq 1+3\ep$, it follows that $\displaystyle \frac{\lambda_0(A)}{\lambda_0(A\cap G)} \in (1, 1+3\ep)$. This observation together with the discussion preceding Lemma \ref{lem: B2} establishes the second statement. In particular, for any measurable $B\subset A\cap G$ we have
\begin{align*}
\frac{\mu|_A(B)}{\lambda_0|_A(B)} &= \left(\frac{\mu(A\cap G)}{\mu(A)}\right)\left(\frac{\mu|_{A\cap G}(B)}{\lambda_0|_{A\cap G}(B)}\right)\left(\frac{\lambda_0(A)}{\lambda_0(A\cap G)}\right)
\\
&\leq (1+2\ep)(1+3\ep)
\\
&\leq 1+6\ep,
\end{align*}
while the lower bound is $1-3\ep$.
\end{proof}

The next lemma constructs the required map $\theta \colon A \to X$ and applies Proposition \ref{prop: perturbation of measure preserving}. Since $\mu\left(\bigcup\limits_{E\in\mathscr{B}_2}E\right) \geq 1-\ep$, this concludes the proof of Proposition \ref{prop: pre most E}.

\begin{lem}\label{lem: second half} For every $E\in \mathscr{B}_2$ and its corresponding subset $A \subset E$ from Lemma \ref{lem: A subset E}, there exists an injective, bimeasurable, measure-preserving map $\theta : (A,\mathcal{B}\cap A,\lambda_0|_A)\to (X,\mathcal{B},\lambda_0)$ where $\theta(x) \in X^s(x,\eps_X)$ for all $x\in A$.
\end{lem}
\begin{proof}

Recall that each rectangle $R_i$ can be identified with the product $Y_i\x Z_i$, where $Y_i = X^s(z,\eps_X)\cap R_i$ and $Z_i = X^u(z,\eps_X)\cap R_i$ for some $z\in R_i$. Then $R_i = [Y_i,Z_i]$ and is equipped with the product measure $\rho_i \times \nu_i$.
We begin by defining $\theta : A\cap R_i \to R_i$. As $\rho_i$ is non-atomic and both $(A\cap Y_i,\mathcal{B}\cap A\cap Y_i,\rho_i|_{A\cap Y_i})$ and $(Y_i,\mathcal{B}\cap Y_i,\rho_i)$ are Lebesgue spaces (by completeness of $\rho_i$), there is a measure-preserving isomorphism 
$$\theta_i\colon(A\cap Y_i),\mathcal{B}\cap A\cap Y_i,\rho_i|_A)\to (Y_i,\mathcal{B}\cap Y_i,\rho_i).$$
Because $A$ intersects each $R_i$ layerwise by construction, this mapping extends to $A \cap  R_i$ along the unstable direction. In particular, for $y\in A\cap R$, we can define
$$\theta : (A\cap R_i,\mathcal{B}\cap A\cap R_i, \lambda_i|_A)\to (R_i,\mathcal{B}\cap R_i, \lambda_i)$$
by
$$\theta(y) := [y,\theta_i([z,y])].$$
Then $\theta$ is still measure-preserving and satisfies the property that $\theta(x) \in X^s(x,\eps_X)$ for all $x\in R_i$. Furthermore, for all measurable $B\subset A$, $\theta(B)$ is measurable. This is because the bracket operation is a homeomorphism, and $\theta_i$ is an isomorphism.

Moreover, extending the map to all of our rectangles, we have that $\theta : (A\cap \bigsqcup R_i, \lambda_0|_A)\to (\bigsqcup R_i, \lambda_0)$ is measure-preserving since $\lambda_i|_A$ is a scalar multiple of $\lambda_0|_A$ on each $R_i$. Therefore, 
$$\theta : (A,\mathcal{B}\cap A,\lambda_0|_A) \to (X,\mathcal{B},\lambda_0)$$ is measure-preserving as well, because $\lambda_0(X\setminus \bigsqcup R_i) = 0$ from its construction \eqref{eq: lambda_0}. 

\end{proof}

All of the results in this section prove our desired result using Proposition \ref{prop: perturbation of measure preserving}:

\begin{cor}
For every $E\in\mathscr{B}_2$ and its corresponding subset $A\subset E$ from Lemma \ref{lem: A subset E}, there exists a $30\eps$-measure-preserving map
$$\theta : (A,\mathcal{B}\cap A,\mu|_A)\to (X,\mathcal{B},\mu)$$
with $\theta(x)\in X^s(x,\eps_X)$ for all $x\in A$.
\end{cor}

\bibliographystyle{amsalpha}
\bibliography{bernoulli}
\end{document}